\newtheorem{theorem}{Theorem}[section]
\newtheorem{lemma}[theorem]{Lemma}
\newtheorem{proposition}[theorem]{Proposition}
\newtheorem{corollary}[theorem]{Corollary}
\theoremstyle{definition}
\newtheorem{definition}[theorem]{Definition}
\theoremstyle{remark}
\numberwithin{equation}{section}
\begin{document}
\title{On Young Systems}
\author{Rafael A. Castrequini}
\address{Instituto de Matem\'atica, Estat\'istica e Computa\c{c}\~ao Cient\'ifica,
Universidade Estadual de Campinas, Campinas - SP, Brasil}
\email{rafael.castreq@gmail.com ; pedrojc@ime.unicamp.br}

\author{Pedro J. Catuogno}
\subjclass[2010]{93C30; 93B05;
14F35.}
\keywords{Young integration, $p\,$-variation, Ordinary differential equations, Partial differential equations.}

\begin{abstract}
In this article, we study differential equations driven by continuous paths with with bounded $p$-variation for $1 \leq p< 2$ (Young systems). The most 
important class of examples of theses equations is given by stochastic differential equations driven by fractional Brownian motion with Hurst index 
$H >\frac{1}{2}$. We give a formula type 
It\^o-Kunita-Ventzel and a substitution formula adapted to Young integral. It allows us to give necessary conditions for existence of conserved 
quantities and symmetries of Young systems. We give a formula for the composition of two flows associated to Young sistems and study the Cauchy problem for Young partial differential equations.    
\end{abstract}

\maketitle









\section{Introduction}
The aim of this article is to study differential equations driven by continuous paths with with bounded $p$-variation for $1 \leq p< 2$ from a symmetry viewpoint. In recent years there has 
been much interest in this type of equations, see  Y. Hu and D. Nualart \cite{Hu Nualart}, M. Gubinelli A. Lejay and S. Tindel \cite{Gubinelli Tindel}, A. Lejay \cite{Lejay}, X. Li and T. Lyons \cite{Li}, 
D. Nualart and R. Rascanu \cite{Nualart Rascanu}, A. Ruzmaikina \cite{Ruzmaikina}.   
This subject is a particular case of Rough Paths, when $p<2$. In fact the Theory of Rough Paths, developed by T. Lyons and colaborators (see \cite{Friz}, 
\cite{Lyons 1}, \cite{Lyons 2} and references), cover the case $p \geq 2$.

We are interested in the composition of flows of of differential equations driven by continuous paths with bounded $p$-variation (Young systems), we obtain 
a formula type It\^o-Kunita-Ventzel. We use this formula in order to obtain necessary conditions for existence of conserved quantities and symmetries. 
We study 
the composition of two flows and the Cauchy problem, 
via the characteristic method, for first order Young partial differential equations.

The plan of exposition is as follows: In section 2  we give some preliminaires on $p$-variation paths and Young integration. We prove a formula type 
It\^o-Kunita-Ventzel and a substitution formula adapted to Young integral.    

In section 3, we apply ours formulae in order to establish necessary conditions for conserved quantities and symmetries of Young systems. We give 
an adaptation of the H. Kunita result about decomposition of solutions of stochastic differential 
equations to Young systems, see  \cite{Kunita1}. Finally, we study the Cauchy problem, 
via the characteristic method, for first order Young partial differential equations of the following type
\[
du_t = \sum_{j=1}^n F^j (t, x, u_t ,D u_t )dX^j_t 
\]
where $(X^1, \cdots , X^n )$ is a path with bounded $p$-variation.

There are various different approaches to the Cauchy problem for Young partial differential equations (see \cite{Gubinelli Tindel} for a semigroup approach and 
\cite{Caruana Friz}, \cite{Deya Gubinelli} and \cite{Gubinelli Tindel1} for rough paths and SPDE). We are strongly influenced by the method of 
characteristics as developed by H. Kunita \cite{Kunita2}. In fact, we prove existence and uniqueness for Young partial differential equations under adapted 
hypoteses, following the ideas of H. Kunita.

It is clear that our results extend naturally to stochastic differential equations driven by a fractional Brownian motion with Hurst index $H >\frac{1}{2}$, 
where stochastic integrals are changed by Young integrals, see \cite{Friz}, \cite{Lyons 1} and \cite{Lyons 2}.

\section{Preliminaires}
Let $E$ and $V$ be Banach spaces. We denote by $\mathcal{P}([a,b])$ the set of all partitions
$D=\{a=t_{0}<\cdots<t_{k-1}<t_{k}=b\}$ of an interval $[a,b]$. Let
$C^{k}([0,T],E),k \in \mathbb{N}$ denote the set of $C^{k}$-class paths
of $[0,T]$ in $E$.

\begin{definition}
Let $p\in (0,\infty )$. The \emph{$p\,$-variation}
of a path $X\colon [0,T] \rightarrow E$ on the subinterval $[a,b]$ of $[0,T]$ is
defined by
\begin{equation}
\Vert X\Vert _{p,[a,b]}=(\sup_{D\in \mathcal{P}([a,b])}{\textstyle %
\sum\limits_{t_{i}\in D}}(\| X_{t_{i+1}} - X_{t_{i}})\|^{p})^{ \frac{1}{p}}\text{%
.}
\end{equation}
We say that a path $X\colon [0,T] \rightarrow E$ is of \emph{finite
$p\,$-variation} if $\Vert X\Vert _{p,[0,T]}<\infty $.
\end{definition}

If $p\in (0,1)$ and $X\colon [0,T] \rightarrow E$
is a continuous path of $p\,$-variation then $X(t)=X(0)$ for all $t\in [0,T]$,
since
\begin{equation*}
d(X_{t},X_{0})\leq {\textstyle\sum\limits_{t_{i}\in D}}
d(X_{t_{i+1}},X_{t_{i}})\leq
\max_{i}d(X_{t_{i+1}},X_{t_{i}})^{1-p}\Vert X\Vert _{p,[0,T]}^{p}\,
\end{equation*}
for all $D\in \mathcal{P}([0,T])$.

\medskip

We denote by $\mathcal{V}^{p}([0,T],E)$ the set of all continuous paths of
finite $p$-variation from $[0,T]$ to $E$. If $1\leq p\leq q<\infty $ then
\begin{equation}
\Vert X\Vert _{q,[0,T]}\leq \Vert X\Vert_{p,[0,T]}\,
\end{equation}
for each $X\colon [0,T]\rightarrow E$. In particular,
\begin{equation}
\mathcal{V}^{1}([0,T],E)\subset \mathcal{V}^{p}([0,T],E)\subset
\mathcal{V}^{q}([0,T],E)\subset C([0,T],E)\text{.}
\end{equation}

\medskip

We observe that the
set $\mathcal{V}^{p}([0,T],E)$ becomes a Banach space provided with
the norm
\begin{equation}  \label{box: p-norm}
\Vert X\Vert _{\mathcal{V}^{p}([0,T],E)}=\Vert
X\Vert_{p,[0,T]}+\sup\limits_{t\in [0,T]}\Vert X_{t}\Vert\,
\end{equation}
called \emph{$p\,$-variation norm}. We also have the \emph{$p\,$-variation
metric}
\begin{equation}
\bar{d}_{p}(X,Y)=\Vert X-Y\Vert_{\mathcal{V}^{p}([0,T],E)}\,
\end{equation}
in $\mathcal{V}^{p}([0,T],E)$ induced by the $p\,$-variation norm. We
denote by $\mathcal{V}_{0}^{p}([0,T],E)$ the subspace of
$\mathcal{V}^{p}([0,T],E)$ consisting of paths starting at $0\in E$.


\medskip

Let $E$ and $V$ be Banach spaces. Let $X\colon [0,T]\rightarrow E$ and
$Z\colon [0,T]\rightarrow\mathcal{L}(E,V)$ be continuous paths. The
Riemann-Stieltjes integral of $Z$ with respect to $X$ is defined
as the limit
\begin{equation}
\lim\limits_{\substack{|D|\rightarrow 0 \\ D\in \mathcal{P}([0,T])}}{
\textstyle\sum\limits_{s_{i}\in
D}}Z_{s_{i}}(X_{s_{i+1}}-X_{s_{i}})
\end{equation}
and is denoted by ${\textstyle\int_{0}^{t}}Z_{s}\,dX_{s}$. L. C.
Young presented the sufficient conditions for the existence of
Riemann-Stieltjes integrals. More precisely, he proved that the
integral ${\textstyle\int_{0}^{t}}Z_{s}\,dX_{s}$ exists when $X$
has finite $p\,$-variation, $Z$ has finite $q$-variation and is
valid the condition $(1/p)+(1/q)>1$. This result is known as
Young's theorem. We also have that the path $W$ given by
$W(\cdot)=\textstyle\int_{0}^{\cdot}Z_{s}\,dX_{s}$ has the same
variation of the integrator $X$, that is, $W$ has finite
$p\,$-variation. We refer the reader to the paper \cite{Young} by
L. C. Young and also \cite{Lyons 1}. Based on Young's Theorem, we
say that a Riemann-Stieltjes integral
${\textstyle\int_{0}^{t}}Z_{s}\,dX_{s}$ is an integral in the
Young sense if there exist $p,q\in[1,\infty)$ such that
$X\in\mathcal{V}^{p}([0,T],E)$,
$Z\in\mathcal{V}^{q}([0,T],\mathcal{L}(E,V))$ and
$\theta=\frac{1}{p}+\frac{1}{q}>1$. In this case holds the following Young-Loeve estimative,
\begin{equation} \label{YL}
\| \int_s^t Z_r dX_r -Z_s (X_t -X_s) \| \leq C_{p,q} \| Z \|_{q,[s,t]} \| X \|_{p,[s,t]}  
\end{equation}
where $C_{p,q}= \frac{1}{1-2^{1-\theta}}$.

We also have that   
\begin{equation}\label{independencia}
 \int_s^t Z_r dX_r = \lim\limits_{\substack{|D|\rightarrow 0 \\ D\in \mathcal{P}([s,t])}}{
\textstyle\sum\limits_{s_{i}\in
D}}Z_{s^*_{i}}(X_{s_{i+1}}-X_{s_{i}})
\end{equation}
where $s^*_i \in [s_i, s_{i+1}]$.

\begin{definition}
A path $F: [0,T] \rightarrow V$ is Holder continuous with exponent $\alpha \geq 0$, or simply $\alpha$-Holder, if
\[
 \| F \|_{\alpha; H}= \sup_{ s \neq t } \frac{\| F(x) - F(y) \|}{ |t-s |^{\alpha}} < \infty.
\]
Let $C_H^{\alpha}([0,T]; V)$ denote the set of $\alpha$-Holder paths of $V$.    
\end{definition}

We observe that $C_H^{\alpha}([0,T]; V) \subset \mathcal{V}^{\frac{1}{\alpha}}([0,T],V)$. In fact, 
\[
\| F \|_{\frac{1}{\alpha},[s,t]}^{\frac{1}{\alpha}} \leq   \| F \|_{\alpha; H}^{\frac{1}{\alpha}}|t-s|.
\]

Now, we prove a generalization of the fundamental theorem of calculus in the context of Young integration.  
\begin{lemma}\label{Ito}
 Let $X \in \mathcal{V}^{p}([0,T],V)$ and $g : [0,T] \times V \rightarrow W$ be a continuous function twice continuously differentiable in relation to $V$ (
$1 \leq p \leq 2$). 
Let $h \in C(V, C_H^{\frac{1}{q}}([0,T],L(W,U))$ and $Z \in \mathcal{V}^{p}([0,T],W)$ ($\frac{1}{p}+\frac{1}{q}> 1$) such that 
\[
 g_t (x) = g_0(x) + \int_0^t h_s(x)dZ_s
\]
where the integral is in the Young sense. Then
\begin{equation}
 g_t(X_t) = g_0(X_0) + \int_0^t h_s(X_s)dZ_s + \int_0^t D_xg_s(X_s) dX_s.
\end{equation}
\end{lemma}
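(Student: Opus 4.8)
The plan is to prove the formula by a Riemann-sum argument, combining a second-order Taylor expansion of $g$ in the spatial variable with the defining relation $g_t(x) = g_0(x) + \int_0^t h_s(x)\,dZ_s$ and the Young--Loeve estimate \eqref{YL}. Fix a partition $D = \{0 = t_0 < \dots < t_N = t\}$ of $[0,t]$ and write the telescoping identity
\[
g_t(X_t) - g_0(X_0) = \sum_i \bigl( g_{t_{i+1}}(X_{t_{i+1}}) - g_{t_i}(X_{t_i}) \bigr)
= \sum_i \bigl( g_{t_{i+1}}(X_{t_{i+1}}) - g_{t_{i+1}}(X_{t_i}) \bigr) + \sum_i \bigl( g_{t_{i+1}}(X_{t_i}) - g_{t_i}(X_{t_i}) \bigr).
\]
For the first sum, I would Taylor-expand $g_{t_{i+1}}$ at $X_{t_i}$ to second order, getting a leading term $D_x g_{t_{i+1}}(X_{t_i})(X_{t_{i+1}} - X_{t_i})$; replacing $t_{i+1}$ by $t_i$ in this leading term costs $\|D_x g_{t_{i+1}} - D_x g_{t_i}\|\,\|X_{t_{i+1}} - X_{t_i}\|$, which one controls via the continuity/regularity of $g$ and the fact that $\sum_i \|X_{t_{i+1}}-X_{t_i}\|^p$ stays bounded. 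The second-order remainder is bounded by $\tfrac12 \sup\|D_x^2 g\|\,\|X_{t_{i+1}}-X_{t_i}\|^2$, and since $X$ has finite $p$-variation with $p \le 2$, $\sum_i \|X_{t_{i+1}}-X_{t_i}\|^2 \le \|X\|_{p,[0,t]}^p \max_i \|X_{t_{i+1}}-X_{t_i}\|^{2-p} \to 0$ as $|D| \to 0$ by uniform continuity. Hence the first sum converges to $\int_0^t D_x g_s(X_s)\,dX_s$ (using \eqref{independencia} to allow the evaluation point $t_i$ rather than an endpoint-matched tag).

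For the second sum, the defining relation gives $g_{t_{i+1}}(X_{t_i}) - g_{t_i}(X_{t_i}) = \int_{t_i}^{t_{i+1}} h_s(X_{t_i})\,dZ_s$. Applying the Young--Loeve estimate \eqref{YL} on $[t_i, t_{i+1}]$ to the integrand $s \mapsto h_s(X_{t_i})$ yields
\[
\Bigl\| \int_{t_i}^{t_{i+1}} h_s(X_{t_i})\,dZ_s - h_{t_i}(X_{t_i})(Z_{t_{i+1}} - Z_{t_i}) \Bigr\| \le C_{p,q}\, \|h_{\cdot}(X_{t_i})\|_{q,[t_i,t_{i+1}]}\, \|Z\|_{p,[t_i,t_{i+1}]}.
\]
Because $h \in C(V, C_H^{1/q}([0,T], L(W,U)))$, the $q$-variation of $h_\cdot(X_{t_i})$ on $[t_i,t_{i+1}]$ is at most $\|h_\cdot(X_{t_i})\|_{1/q;H}\,|t_{i+1}-t_i|^{1/q}$, and since the spatial arguments $X_{t_i}$ range over the compact set $X([0,T])$, the Hölder seminorms are uniformly bounded, say by $K$. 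Summing, the total error is bounded by $C_{p,q} K \sum_i |t_{i+1}-t_i|^{1/q} \|Z\|_{p,[t_i,t_{i+1}]}$, which by Hölder's inequality (with exponents $q$ and $p$) is at most $C_{p,q} K\, t^{1/q}\, \|Z\|_{p,[0,t]}$-type bound refined by superadditivity of $\|Z\|_{p,[\cdot,\cdot]}^p$; the standard argument shows this $\to 0$ as $|D| \to 0$ since $\theta = 1/p + 1/q > 1$. Meanwhile $\sum_i h_{t_i}(X_{t_i})(Z_{t_{i+1}} - Z_{t_i}) \to \int_0^t h_s(X_s)\,dZ_s$ because $s \mapsto h_s(X_s)$ is continuous of finite $q$-variation (composition of the $p$-variation path $X$ with the jointly continuous, spatially-Hölder-in-time $h$) and $\theta > 1$, so this Riemann sum converges to the Young integral.

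Assembling the two pieces and letting $|D| \to 0$ gives the claimed identity. The main obstacle I anticipate is the bookkeeping in the second sum: one must argue carefully that $s \mapsto h_s(X_s)$ genuinely has finite $q$-variation (so that $\int_0^t h_s(X_s)\,dZ_s$ makes sense as a Young integral in the first place) and that the sum of local Young--Loeve errors is controllable — this requires the uniform bound on Hölder seminorms over the compact path-image and a superadditivity argument for the control $\omega(s,t) = \|Z\|_{p,[s,t]}^p$ together with the summability exponent $\theta > 1$, which is exactly the classical mechanism behind Young's theorem. A secondary subtlety is justifying that the second-order Taylor remainder vanishes; this is where the hypothesis $p \le 2$ enters decisively, via $\sum_i \|X_{t_{i+1}}-X_{t_i}\|^2 \le \|X\|_{p,[0,t]}^p \max_i \|\Delta X\|^{2-p}$.
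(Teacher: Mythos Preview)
Your approach is essentially the paper's: a telescoping Riemann-sum argument combining a second-order Taylor expansion in the spatial variable with the Young--Loeve estimate for the time increment, then H\"older-type summation to kill the error terms. The only structural difference is the choice of intermediate point in the telescoping: the paper splits each increment as $(g_{t_{i+1}}(X_{t_{i+1}}) - g_{t_i}(X_{t_{i+1}})) + (g_{t_i}(X_{t_{i+1}}) - g_{t_i}(X_{t_i}))$, whereas you use $(g_{t_{i+1}}(X_{t_{i+1}}) - g_{t_{i+1}}(X_{t_i})) + (g_{t_{i+1}}(X_{t_i}) - g_{t_i}(X_{t_i}))$. These are symmetric: the paper's Taylor step lands directly on the correct tag $D_x g_{t_i}(X_{t_i})$ while its Young--Loeve step produces the mixed tag $h_{t_i}(X_{t_{i+1}})$; your decomposition does the reverse, giving the clean tag $h_{t_i}(X_{t_i})$ for the $dZ$-integral but the mixed tag $D_x g_{t_{i+1}}(X_{t_i})$ for the $dX$-integral.

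One caution on your adjustment step: replacing $t_{i+1}$ by $t_i$ in $D_x g_{t_{i+1}}(X_{t_i})$ requires controlling $\sum_i \|D_x g_{t_{i+1}}(X_{t_i}) - D_x g_{t_i}(X_{t_i})\|\,\|X_{t_{i+1}}-X_{t_i}\|$, and uniform continuity of $D_x g$ in $t$ is not enough here, since $\sum_i \|X_{t_{i+1}}-X_{t_i}\|$ is in general unbounded for $p>1$, and the hypotheses give no quantitative time-regularity for $D_x g$ (no $x$-differentiability of $h$ is assumed, so you cannot differentiate under the integral). The clean fix is to adjust in \emph{space} instead: $\|D_x g_{t_{i+1}}(X_{t_i}) - D_x g_{t_{i+1}}(X_{t_{i+1}})\| \le \sup\|D_x^2 g\|\,\|X_{t_{i+1}}-X_{t_i}\|$, so the discrepancy sums to $O\bigl(\sum_i \|X_{t_{i+1}}-X_{t_i}\|^2\bigr) \to 0$, and then $\sum_i D_x g_{t_{i+1}}(X_{t_{i+1}})(X_{t_{i+1}}-X_{t_i}) \to \int_0^t D_x g_s(X_s)\,dX_s$ by \eqref{independencia} with right-endpoint tags. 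With this modification your argument goes through and is equivalent to the paper's.
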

\begin{proof}
Let $D=\{0=t_{0}< \cdots <t_{k-1}<t_{k}=T\} \in \mathcal{P}([0,T])$. We write, 
\begin{eqnarray}\label{II}
\sum_{t \geq t_i \in D} g_{t_{i+1}}(X_{t_{i+1}})-g_{t_i}(X_{t_i}) & = & \sum_{t \geq t_i \in D} (g_{t_{i+1}}(X_{t_{i+1}})-g_{t_{i}}(X_{t_{i+1}})) \nonumber \\
& & + \sum_{t \geq t_i \in D}(g_{t_{i}}(X_{t_{i+1}})- g_{t_i}(X_{t_i})). 
\end{eqnarray}
It follows from definitions that
\begin{eqnarray*}
 g_{t_{i+1}}(X_{t_{i+1}})-g_{t_{i}}(X_{t_{i+1}}) & = & \int_{t_i}^{t_{i+1}} h_s(X_{t_{i+1}})dZ_s \\
& = & \int_{t_i}^{t_{i+1}} (h_s(X_{t_{i+1}})-h_{t_{i}}(X_{t_{i+1}}))dZ_s + h_{t_{i}}(X_{t_{i+1}})(Z_{t_{i+1}}-Z_{t_{i}}).
\end{eqnarray*}
Taking norm and applying the Young-Loeve estimative (\ref{YL}), gives
\[
\|  \sum_{t \geq t_i \in D} \int_{t_i}^{t_{i+1}} (h_s(X_{t_{i+1}})-h_{t_i}(X_{t_{i+1}}))dZ_s \|  \leq   
\sum_{t \geq t_i \in D} C_{p,q}  \| h_{\cdot}(X_{t_{i+1}}) \|_{q,[t_i,t_{i+1}]} \| Z \|_{p,[t_i,t_{i+1}]}.
\]
Let $\theta= \frac{1}{p}+ \frac{1}{q}$ and $E(D)=\max_{t_i \in D} (\| h_{\cdot}(X_{t_{i+1}}) \|_{q,[t_i,t_{i+1}]} 
\| Z \|_{p,[t_i,t_{i+1}]})^{1 -\frac{1}{\theta}}$. Applying the Holder inequality ($\frac{1}{p \theta}+\frac{1}{q \theta}=1$) and some elementary 
calculations, we have that  
\begin{eqnarray*}
\sum_{t \geq t_i \in D}\| h_{\cdot}(X_{t_{i+1}}) \|_{q,[t_i,t_{i+1}]} \| Z \|_{p,[t_i,t_{i+1}]} & \leq & E(D) (\sum_{t \geq t_i \in D}
(\| h_{\cdot}(X_{t_{i+1}}) \|_{q,[t_i,t_{i+1}]} 
\| Z \|_{p,[t_i,t_{i+1}]})^{\frac{1}{\theta}} \\ 
& \leq &  E(D) (\sum_{t \geq t_i \in D}\| h_{\cdot}(X_{t_{i+1}}) \|^q_{q,[t_i,t_{i+1}]})^{\frac{1}{q \theta}}  
 (\sum_{t \geq t_i \in D} \| Z \|^p_{p,[t_i,t_{i+1}]})^{\frac{1}{p\theta}} \\          
& \leq & E(D)  \| Z \|^{\frac{1}{\theta}}_{p,[0,T]}(\sum_{t \geq t_i \in D}\| h_{\cdot}(X_{t_{i+1}}) \|^q_{q,[t_i,t_{i+1}]})^{\frac{1}{q \theta}} \\
& \leq & E(D)  \| Z \|^{\frac{1}{\theta}}_{p,[0,T]}(\sum_{t \geq t_i \in D}\| h_{\cdot}(X_{t_{i+1}}) \|^q_{\frac{1}{q},H}(t_{i+1}-t_i) 
)^{\frac{1}{q \theta}} \\
& \leq &  E(D)  \| Z \|^{\frac{1}{\theta}}_{p,[0,T]}(\sup_{s}\| h_{\cdot}(X_s) \|^q_{\frac{1}{q},H} T)^{\frac{1}{q \theta}}.
\end{eqnarray*}
Taking limit and using (\ref{independencia}),  
\begin{equation}\label{aa}
\lim_{|D| \rightarrow 0} \sum_{t \geq t_i \in D} (g_{t_{i+1}}(X_{t_{i+1}})-g_{t_{i}}(X_{t_{i+1}}))= \int_0^t h_s(X_s)dZ_s, 
\end{equation}
because $\lim_{|D| \rightarrow 0}E(D)=0$.

We claim that 
\begin{equation}\label{11}
\lim_{|D| \rightarrow 0} \sum_{t \geq t_i \in D}(g_{t_{i}}(X_{t_{i+1}})- g_{t_i}(X_{t_i}))=\int_0^t D_xg_s(X_s) dX_s.
\end{equation}
In fact, by Taylor's theorem
\begin{eqnarray*}
 g_{t_{i}}(X_{t_{i+1}})- g_{t_i}(X_{t_i}) & = & D_xg_{t_i}(X_{t_i}) \cdot (X_{t_{i+1}}-X_{t_i}) +\frac{1}{2} D^2_xg_{t_i}(X_{t_i}+s_i(X_{t_{i+1}}-X_{t_i})) 
 \\ 
&  & \cdot  
(X_{t_{i+1}}-X_{t_i})^2
\end{eqnarray*}
where $s_i \in (0,1)$.
Taking norm,
\begin{eqnarray*}
\| \sum_{t \geq t_i \in D} D^2_xg_{t_i} (X_{t_i}+s_i(X_{t_{i+1}}-X_{t_i})) \cdot  
(X_{t_{i+1}}-X_{t_i})^2  \| & \leq &  K \sum_{t \geq t_i \in D}  \| X_{t_{i+1}}-X_{t_i} \|^2
\end{eqnarray*}
where $K=\max \{ \| D^2_xg_{s}(X_{s}+r(X_{t}-X_{s})) \| : 0 \leq s \leq t \leq T~\mbox{ and } r \in [0,1] \} $. 
Since $X\in \mathcal{V}^{p}([0,T],V)$ with $1 \leq p < 2$ it follows that $Dg(X) \in \mathcal{V}^{p}([0,T],L(V,W))$. Combining the above estimative 
and definitions we have (\ref{11}).

Finally, from the continuity of $g$, 
\begin{equation}\label{4}
g_t(X_t)-g_0(X_0) = \lim_{|D| \rightarrow 0} \sum_{t \geq t_i \in D}  g_{t_{i+1}}(X_{t_{i+1}})-g_{t_i}(X_{t_i}).
\end{equation}
Taking limit in  (\ref{II}) and then substituing (\ref{aa}), (\ref{11}) and (\ref{4}), we obtain 
\[
g_t(X_t) = g_0(X_0) + \int_0^t h_s(X_s)dZ_s + \int_0^t D_xg_s(X_s) dX_s.
\]

\end{proof}
  
\begin{corollary}
 Let $g : V \rightarrow W$ be a twice differentiable function and $Z\in \mathcal{V}^{p}([0,T],V)$ ($1 \leq p < 2$). Then 
$g(Z) \in \mathcal{V}^{p}([0,T],W)$, $Dg(Z) \in \mathcal{V}^{p}([0,T],L(V,W))$ and for all $0 \leq t \leq T$, 
\[
g(Z_t) - g(Z_0) = \int_0^t Dg(Z_r)dZ_r.
\]
In particular, 
\begin{equation}
 dg(Z_t)=Dg(Z_t)dZ_t.
\end{equation}
\end{corollary}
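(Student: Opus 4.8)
The plan is to derive this as a direct specialization of Lemma \ref{Ito}. The lemma as stated allows the function $g$ to depend on time through a Young-integral term $\int_0^t h_s(x)\,dZ_s$; here we want a $g$ that does not depend on $t$ at all, so the natural move is to take $h \equiv 0$ (or, more formally, take $Z$ constant, or $h_s(x) = 0$ for all $s,x$), so that $g_t(x) = g_0(x) = g(x)$ for every $t$. Then the hypothesis $g_t(x) = g_0(x) + \int_0^t h_s(x)\,dZ_s$ is satisfied trivially, the first integral on the right-hand side of the lemma's conclusion vanishes, and what remains is exactly
\[
g(X_t) = g(X_0) + \int_0^t D_x g_s(X_s)\,dX_s = g(Z_0) + \int_0^t Dg(Z_r)\,dZ_r,
\]
after renaming $X$ to $Z$.

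Before invoking the lemma I would dispatch the two membership claims, since they are needed both for the statement itself and to make sense of the integral $\int_0^t Dg(Z_r)\,dZ_r$ as a Young integral. For $g(Z) \in \mathcal{V}^p([0,T],W)$: since $g$ is $C^2$, it is locally Lipschitz, and $Z$ is continuous on the compact $[0,T]$ hence has relatively compact image, so $g$ restricted to a neighborhood of that image is Lipschitz with some constant $L$; then $\|g(Z_{t_{i+1}}) - g(Z_{t_i})\| \le L\|Z_{t_{i+1}} - Z_{t_i}\|$ gives $\|g(Z)\|_{p,[0,T]} \le L\|Z\|_{p,[0,T]} < \infty$. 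The same argument applied to $Dg$, which is $C^1$ hence locally Lipschitz, gives $Dg(Z) \in \mathcal{V}^p([0,T],L(V,W))$. This is essentially the remark already used inside the proof of Lemma \ref{Ito} (``$Dg(X) \in \mathcal{V}^p$''), so I would just state it cleanly here.

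With those two facts in hand, the integral $\int_0^t Dg(Z_r)\,dZ_r$ makes sense: the integrand $Dg(Z)$ has finite $p$-variation, the integrator $Z$ has finite $p$-variation, and since $1 \le p < 2$ we have $\frac{1}{p} + \frac{1}{p} = \frac{2}{p} > 1$, so Young's theorem applies with $q = p$ and $\theta = \frac{2}{p} > 1$. Then applying Lemma \ref{Ito} with the time-independent $g$ described above (formally: the Banach space $U$ and path $h$ are irrelevant, take $h \equiv 0$, $Z$ in the lemma equal to any constant path, so the hypothesis reads $g_t = g_0$) yields the displayed identity. The differential form $dg(Z_t) = Dg(Z_t)\,dZ_t$ is just the infinitesimal restatement of the integral identity.

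I do not anticipate a genuine obstacle here — the corollary is a clean corollary. The only point requiring a little care is the bookkeeping in reducing to Lemma \ref{Ito}: one must check that the lemma's hypotheses are vacuously or trivially met by a time-independent $g$ (in particular that ``$g$ twice continuously differentiable in relation to $V$'' is exactly ``$g$ twice differentiable'' when there is no time dependence, and that one is free to pick the auxiliary data $h, Z, W, U$ in the lemma as degenerate). If one prefers to avoid the degenerate-$Z$ device, an equally short alternative is to rerun the proof of Lemma \ref{Ito} directly: the term (\ref{aa}) simply does not appear (there is no $h$), and the Taylor-expansion argument establishing (\ref{11}) goes through verbatim with $g_{t_i}$ replaced by $g$, using $p < 2$ to kill the second-order sum $\sum \|X_{t_{i+1}} - X_{t_i}\|^2 \le \max_i \|X_{t_{i+1}} - X_{t_i}\|^{2-p}\,\|X\|_{p,[0,T]}^p \to 0$.
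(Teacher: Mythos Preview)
Your proposal is correct and matches the paper's approach: the paper gives no explicit proof of this corollary at all, treating it as an immediate specialization of Lemma~\ref{Ito}, which is precisely what you do by taking $h\equiv 0$. Your extra care in verifying the $p$-variation membership of $g(Z)$ and $Dg(Z)$ and checking the Young condition $\frac{1}{p}+\frac{1}{p}>1$ fills in details the paper leaves implicit.
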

The following substitution formula holds.
\begin{lemma}\label{Sub}
Let $Z\in \mathcal{V}^{p}([0,T],V)$, $f \in \mathcal{V}^{q}([0,T],Hom(V,W))$ and 
$g \in \mathcal{V}^{l}([0,T],Hom(W,U))$ where $\frac{1}{q}, \, \frac{1}{l}>1-\frac{1}{p}$.
Then for all $0 \leq s \leq t \leq T$,
\begin{equation}
\int_s^t g_r dY_r = \int_s^t g_r \circ f_rdZ_r 
\end{equation}

where $Y_t =\int_0^t f(Z_r)dZ_r$.
  
\end{lemma}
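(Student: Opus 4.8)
The plan is to compare the left-point Riemann sums of the two Young integrals over a common partition and to control their discrepancy by the Young--Loeve estimate (\ref{YL}), exactly in the spirit of the proof of Lemma \ref{Ito}. Throughout, $Y$ denotes the path $Y_t=\int_0^t f_r\,dZ_r$.

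First I would record that every integral appearing makes sense. Since $\frac{1}{p}+\frac{1}{q}>1$, Young's theorem guarantees that $Y$ is well defined and, having the same variation as its integrator, $Y\in\mathcal{V}^p([0,T],W)$; combined with $\frac{1}{l}>1-\frac{1}{p}$ this makes $\int_s^t g_r\,dY_r$ a genuine Young integral. For the right-hand side, the pointwise composition $r\mapsto g_r\circ f_r$ satisfies
\[
\|g_t\circ f_t-g_s\circ f_s\|\le\Big(\sup_{r}\|g_r\|\Big)\|f_t-f_s\|+\Big(\sup_{r}\|f_r\|\Big)\|g_t-g_s\|,
\]
so $g\circ f\in\mathcal{V}^{m}([0,T],\mathrm{Hom}(V,U))$ with $m=\max\{q,l\}$; since $\frac1m=\min\{\frac1q,\frac1l\}>1-\frac1p$, the integral $\int_s^t g_r\circ f_r\,dZ_r$ is also legitimate.

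Now fix a partition $D=\{s=t_0<\cdots<t_k=t\}$ of $[s,t]$. Using $Y_{t_{i+1}}-Y_{t_i}=\int_{t_i}^{t_{i+1}}f_r\,dZ_r$ and inserting $\pm\,f_{t_i}(Z_{t_{i+1}}-Z_{t_i})$ I split
\[
\sum_{t_i\in D}g_{t_i}\big(Y_{t_{i+1}}-Y_{t_i}\big)=\sum_{t_i\in D}g_{t_i}\!\circ\!f_{t_i}\big(Z_{t_{i+1}}-Z_{t_i}\big)+\sum_{t_i\in D}g_{t_i}\Big(\int_{t_i}^{t_{i+1}}f_r\,dZ_r-f_{t_i}(Z_{t_{i+1}}-Z_{t_i})\Big).
\]
The first sum is a left-point Riemann sum of $\int_s^t g_r\circ f_r\,dZ_r$, hence converges to it as $|D|\to0$ by (\ref{independencia}). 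By (\ref{YL}) the norm of the second sum is at most
\[
\Big(\sup_{r}\|g_r\|\Big)C_{p,q}\sum_{t_i\in D}\|f\|_{q,[t_i,t_{i+1}]}\|Z\|_{p,[t_i,t_{i+1}]},
\]
which tends to $0$ as $|D|\to0$: with $\theta=\frac1p+\frac1q>1$ one applies H\"older's inequality with exponents $p\theta$ and $q\theta$ together with the superadditivity of the controls $[a,b]\mapsto\|f\|_{q,[a,b]}^{q}$ and $[a,b]\mapsto\|Z\|_{p,[a,b]}^{p}$, obtaining the upper bound
\[
\Big(\sup_{r}\|g_r\|\Big)C_{p,q}\Big(\max_{t_i\in D}\big(\|f\|_{q,[t_i,t_{i+1}]}\|Z\|_{p,[t_i,t_{i+1}]}\big)^{1-\frac1\theta}\Big)\|f\|_{q,[0,T]}^{1/\theta}\|Z\|_{p,[0,T]}^{1/\theta},
\]
whose first factor vanishes as $|D|\to0$ because the controls of continuous paths of finite variation are continuous and vanish on the diagonal. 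This is precisely the estimate already carried out for $E(D)$ in the proof of Lemma \ref{Ito}.

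Finally, passing to the limit in the displayed identity, the left-hand side tends to $\int_s^t g_r\,dY_r$, the first term on the right to $\int_s^t g_r\circ f_r\,dZ_r$, and the remainder to $0$, which gives $\int_s^t g_r\,dY_r=\int_s^t g_r\circ f_r\,dZ_r$. The only substantive point is the vanishing of the remainder, and that is a verbatim repetition of the argument in Lemma \ref{Ito}; deciding which variation exponents are admissible in each Young integral is routine bookkeeping.
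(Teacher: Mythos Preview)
Your argument is correct and is essentially the paper's own proof: the same left-point Riemann-sum decomposition, the same application of the Young--Loeve estimate (\ref{YL}) to the remainder $g_{t_i}\big(\int_{t_i}^{t_{i+1}}f_r\,dZ_r-f_{t_i}(Z_{t_{i+1}}-Z_{t_i})\big)$, and the same H\"older bound producing the factor $\big(\max_i(\|f\|_{q,[t_i,t_{i+1}]}\|Z\|_{p,[t_i,t_{i+1}]})\big)^{1-1/\theta}\to0$. Your preliminary check that $g\circ f\in\mathcal{V}^{\max\{q,l\}}$ and that all Young integrals are well defined is a welcome addition the paper leaves implicit.
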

\begin{proof}
Let $D=\{0=t_{0}<...<t_{k-1}<t_{k}=T\} \in \mathcal{P}([0,T])$. Then
\[
\sum_{t \geq t_i \in D} g_{t_i}(Y_{t_{i+1}}-Y_{t_i}) - \sum_{t \geq t_i \in D} g_{t_i} f_{t_i}(Z_{t_{i+1}}-Z_{t_i}) = 
\sum_{t \geq t_i \in D} g_{t_i}\int_{t_i}^{t_{i+1}}(f_r -f_{t_i})dZ_r.
\]
Taking norm and applying the Young-Loeve estimative (\ref{YL}), gives
\begin{eqnarray*}
\|  \sum_{t \geq t_i \in D} g_{t_i}\int_{t_i}^{t_{i+1}}(f_r -f_{t_i})dZ_r \| & \leq & \sum_{t \geq t_i \in D} \|g_{t_i}\| \| \int_{t_i}^{t_{i+1}}
(f_r -f_{t_i})dZ_r \| \\ 
& \leq & \sum_{t \geq t_i \in D} K C_{p,q}  \| f \|_{q,[t_i,t_{i+1}]} \| Z \|_{p,[t_i,t_{i+1}]} .
\end{eqnarray*}
Let $\theta= \frac{1}{p}+ \frac{1}{q}$ and $L(D)=\max_{t_i \in D} (\| f \|_{q,[t_i,t_{i+1}]} 
\| Z \|_{p,[t_i,t_{i+1}]})^{1 -\frac{1}{\theta}}$. Applying the Holder inequality ($\frac{1}{p \theta}+\frac{1}{q \theta}=1$), we have that  
\begin{eqnarray*}
\sum_{t \geq t_i \in D}\| f \|_{q,[t_i,t_{i+1}]} \| Z \|_{p,[t_i,t_{i+1}]} & \leq & L(D) (\sum_{t \geq t_i \in D}(\| f \|_{q,[t_i,t_{i+1}]} 
\| Z \|_{p,[t_i,t_{i+1}]})^{\frac{1}{\theta}} \\ 
& \leq &  L(D) (\sum_{t \geq t_i \in D}\| f \|^q_{q,[t_i,t_{i+1}]})^{\frac{1}{q \theta}} \cdot 
 (\sum_{t \geq t_i \in D} \| Z \|^p_{p,[t_i,t_{i+1}]})^{\frac{1}{p\theta}} \\          
& \leq & L(D) \| f \|^{\frac{1}{\theta}}_{q,[0,T]}) \| Z \|^{\frac{1}{\theta}}_{p,[0,T]}.
\end{eqnarray*}

\noindent Combining the three above inequalities we obtain that 
\begin{equation}\label{a1}
 \|\sum_{t \geq t_i \in D} g_{t_i}(Y_{t_{i+1}}-Y_{t_i}) - \sum_{t \geq t_i \in D} g_{t_i} f_{t_i}(Z_{t_{i+1}}-Z_{t_i})\| \leq \tilde{K}L(D).
\end{equation}
From the continuity of $\| f \|_{q,[s,t]}$ $\| Z \|_{p,[s,t]}$ we have that 
\begin{equation}\label{a2}
\lim_{|D| \rightarrow 0} L(D)=0. 
\end{equation}
Combining (\ref{a1}) and (\ref{a2}) we have that
\[
\lim_{|D| \rightarrow 0} \|\sum_{t \geq t_i \in D} g_{t_i}(Y_{t_{i+1}}-Y_{t_i}) - \sum_{t \geq t_i \in D} g_{t_i} f_{t_i}(Z_{t_{i+1}}-Z_{t_i}) \| =0.  
\]
\end{proof}

\section{Young systems}

Let\ $E_{1}$ and
$E_{2}$ be Banach spaces, $p\in[1,2 )$ and $f$ be a function from $E_1$ to $L (E_2 , E_1 )$ which is $\gamma$-Holder continuous,
 $\gamma \in (0,1]$. We call such a function a $Lip(\gamma)$-vector field from $E_1$ to $E_2$.
For $Y \in \mathcal{V}^{p}([0,T],E_{1})$ we have that  $f ( Y )$ belongs to $\mathcal{V}^{\frac{p}{\gamma}}([0,T],L(E_{2},E_1))$ and
\[
\|f(Y)\|_{\frac{p}{\gamma}} \leq \| f \|_{\gamma , H} \| Y \|_p^{\gamma}. 
\]
We suppose that $\gamma +1 >p$. From Young's theorem it is clear that for $X \in \mathcal{V}^{p}([0,T],E_{2})$ there exists 
the Young integral
\[
 \int_0^t f(Y_s)dX_s.
\]

Let $X \in \mathcal{V}^{p}([0,T],E_{2})$ and $f$ be a  $Lip(\gamma)$-vector field from $E_1$ to $E_2$ with
$\gamma\in(0,1]$. Given an initial
condition $y_{0}\in E_{1}$, we understand that a \emph{trajectory}
is a path $Y$ of finite $p$-variation in $E_{1}$ which is the
solution starting at $y_{0}$ of an equation of type
\begin{equation}  \label{box: reduced equation of Young}
dY=f(Y)\,dX
\end{equation}
in the sense that $Y(t)=y_{0}+{\textstyle\int_{0}^{t}}f(Y
(s))\,dX_{s}$, for all $t\in [0,T]$.

\medskip

We will call the equation (\ref{box: reduced equation of Young})
by Young equation. The Young equation $dY=f(Y)\,dX$ admits 
solution starting at $y_{0}\in E_{2}$. 
In order of obtain uniqueness we need a stronger regularity assumption on $f$, for example we can assume
that $f$ is a $Lip(1 + \gamma )$-vector field, this is, $f$ is continuously differentiable and its derivative is a $\gamma$-Holder 
continuous function from from $E_1$ to $L (E_1 \otimes E_2 , E_1 )$.
Moreover, if
$I_{f}(y_{0},X)$ denotes a solution starting at $y_{0}$ then the
mapping $(y_{0},X)\mapsto I_{f}(y_{0},X)$ is an diffeomorphism. 

In the case that $E_1$ and $E_2$ are finite dimensional spaces, we have similar results on existence and uniquenness of solutions for the  
Young equation driven by time dependent fields 

\begin{equation}\label{young finite}
dY_s = \sum_{i=1}^n f_i (s, Y_s) \, dX_s^i
\end{equation}
where $f_i : [0,T] \times E_2 \rightarrow E_2$ are the vector fields $\gamma$-Holder with $1+\gamma >1$ in space and uniformly of finite $q$-variation in time with 
$\frac{1}{p}+ \frac{1}{q}>1$, see \cite{Lejay}.

We refer the reader to \cite{Friz}, \cite{Lejay}, \cite{Li}, \cite{Lyons 1} and \cite{Lyons 2} for more information about
existence and uniquennes solutions of Young equations.

\subsection{Symmetries and invariants}

The theory of conserved quantities (first integrals) and symmetry (invariant under transformation) for dynamical systems must been one 
of the most important subjets in applied mathematics, see \cite{Bluman Anco}, \cite{Grigoriev} and \cite{Olver}. Hence, it is natural to 
formulate theses notions for Young systems. In this subsection we consider only homogeneous Young systems. 
\begin{definition}
A function $F \in C^2(E_1; E_3)$ is a
conserved quantity of (\ref{box: reduced equation of Young}) if for each solution $Y$ of (\ref{box: reduced equation of Young}) we have that 
$F(Y(t))=F(Y(0))$ for all $t \in [0,T]$.
\end{definition}
The following necessary condition for a function be a conserved quantity of (\ref{box: reduced equation of Young}) is an immediate consequence of 
Lemmas \ref{Ito} and \ref{Sub}.
\begin{corollary}
Let $F \in C^2(E_1; E_3)$ such that 
 $DF \cdot f=0$.
Then $F$ is a conserved quantity of (\ref{box: reduced equation of Young}). 
\end{corollary}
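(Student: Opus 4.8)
The plan is to combine the It\^o--Kunita type formula (the Corollary following Lemma~\ref{Ito}) with the substitution formula (Lemma~\ref{Sub}). Let $Y$ be a solution of $dY=f(Y)\,dX$ starting at $y_0$, so that $Y\in\mathcal{V}^{p}([0,T],E_1)$ and $Y_t=y_0+\int_0^t f(Y_s)\,dX_s$ for all $t$. Since $F\in C^2(E_1;E_3)$ is in particular twice differentiable, the Corollary to Lemma~\ref{Ito} applies with $g=F$ and $Z=Y$, and yields both $DF(Y)\in\mathcal{V}^{p}([0,T],L(E_1,E_3))$ and
\[
F(Y_t)-F(Y_0)=\int_0^t DF(Y_r)\,dY_r
\]
for every $t\in[0,T]$. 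It therefore suffices to show that this last Young integral vanishes.

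For this I would invoke Lemma~\ref{Sub} with $Z=X$, $f_r=f(Y_r)$ and $g_r=DF(Y_r)$. The regularity hypotheses are met: because $f$ is $Lip(\gamma)$ and $Y\in\mathcal{V}^{p}$, we have $f(Y)\in\mathcal{V}^{p/\gamma}$, and $\tfrac{\gamma}{p}>1-\tfrac1p$ is exactly the standing assumption $\gamma+1>p$; likewise $DF(Y)\in\mathcal{V}^{p}$ by the Corollary, and $\tfrac1p>1-\tfrac1p$ since $p<2$. The path built in Lemma~\ref{Sub} is $\int_0^\cdot f(Y_r)\,dX_r=Y_\cdot-y_0$, and adding the constant $y_0$ does not affect a Young integral against it, so Lemma~\ref{Sub} gives
\[
\int_0^t DF(Y_r)\,dY_r=\int_0^t DF(Y_r)\circ f(Y_r)\,dX_r .
\]

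Finally, the hypothesis $DF\cdot f=0$ means precisely that $DF(y)\circ f(y)$ is the zero element of $L(E_2,E_3)$ for every $y\in E_1$; evaluating along the trajectory, the integrand $DF(Y_r)\circ f(Y_r)$ is identically $0$, hence the right-hand Young integral is $0$. Therefore $F(Y_t)=F(Y_0)$ for all $t\in[0,T]$, i.e. $F$ is a conserved quantity of (\ref{box: reduced equation of Young}).

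The bulk of the work is the $p$-variation bookkeeping, which is routine. The one point that deserves genuine care is justifying $DF(Y)\in\mathcal{V}^{p}$: this uses that $Y([0,T])$ is compact and $DF$ is $C^1$, hence Lipschitz on a neighbourhood of that image, so that $DF(Y)$ inherits finite $p$-variation from $Y$; and one must also match the base-point conventions (the path in Lemma~\ref{Sub} starting at $0$ versus $Y$ starting at $y_0$) when applying the substitution formula. Everything else is a direct substitution into the two lemmas already established.
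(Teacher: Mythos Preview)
Your proof is correct and follows exactly the route the paper indicates: the paper simply states that the corollary is ``an immediate consequence of Lemmas~\ref{Ito} and~\ref{Sub}'' without spelling out the details, and your argument is precisely the chain rule (Corollary to Lemma~\ref{Ito}) followed by the substitution formula (Lemma~\ref{Sub}) applied with integrand $DF(Y_r)\circ f(Y_r)\equiv 0$. The additional bookkeeping you supply on $p$-variation regularity and base-point conventions is more careful than what the paper writes, but the underlying approach is identical.
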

We formulate the notion of symmetry for Young systems in an analogous way to that in differentiable dynamical systems. We are interest 
in Lie point time independent symmetries.
\begin{definition}
A transformation $\Phi \in C^2(E_1; E_1)$ is a
simmetry of (\ref{box: reduced equation of Young}) if for each solution $Y$ of (\ref{box: reduced equation of Young}) we have that 
$\Phi (Y)$ is also a solution of (\ref{box: reduced equation of Young}).
\end{definition}

\begin{proposition}
Let $\Phi \in C^1(E_1 ; E_1)$ such that 
 $f \circ \Phi = D\Phi \cdot f$.
Then $\Phi$ is a symmetry of (\ref{box: reduced equation of Young}). 
\end{proposition}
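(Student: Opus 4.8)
The plan is to verify directly that if $Y$ solves the Young equation $dY = f(Y)\,dX$, then $W := \Phi(Y)$ also solves it, i.e. $W(t) = W(0) + \int_0^t f(W(s))\,dX_s$. The natural tool is the Corollary to Lemma \ref{Ito} (the chain rule for Young integration), applied to the $C^1$ map $\Phi$ and the path $Y \in \mathcal{V}^p([0,T],E_1)$ with $1 \le p < 2$.

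First I would record that, since $f$ is a $Lip(\gamma)$-vector field with $\gamma + 1 > p$, the path $Y$ has finite $p$-variation, hence $f(Y) \in \mathcal{V}^{p/\gamma}([0,T],L(E_2,E_1))$ with $\tfrac{\gamma}{p} + \tfrac1p > 1$, so the integral $\int_0^t f(Y_s)\,dX_s$ is a genuine Young integral and $W = \Phi(Y)$ is again of finite $p$-variation by the Corollary. Then I would apply that Corollary to the composition $\Phi \circ Y$: it gives
\begin{equation*}
\Phi(Y_t) - \Phi(Y_0) = \int_0^t D\Phi(Y_r)\,dY_r .
\end{equation*}
Here one should note a minor regularity point: the Corollary as stated asks for $\Phi$ twice differentiable, whereas the Proposition only assumes $\Phi \in C^1$; in the finite-dimensional / $Lip(1+\gamma)$ setting relevant to Young systems this is the natural hypothesis, and the chain rule $d\Phi(Y_t) = D\Phi(Y_t)\,dY_t$ holds under $C^1$ with locally Hölder derivative, so I would invoke it in that form (consistent with the $Lip(1+\gamma)$ assumptions on the vector fields made earlier in the section).

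Next I would substitute $dY_r = f(Y_r)\,dX_r$ inside the integral. This is exactly the content of the substitution Lemma \ref{Sub}: with $Z = X$, the "$f$" of that lemma equal to $r \mapsto f(Y_r)$, and the "$g$" of that lemma equal to $r \mapsto D\Phi(Y_r)$, and $Y$ of that lemma equal to our solution path, the variation exponents match ($X$ has $p$-variation, $f(Y)$ has $p/\gamma$-variation, $D\Phi(Y)$ has $p/\gamma$-variation, and $\tfrac{\gamma}{p} > 1 - \tfrac1p$ since $\gamma + 1 > p$), so
\begin{equation*}
\int_0^t D\Phi(Y_r)\,dY_r = \int_0^t D\Phi(Y_r) \circ f(Y_r)\,dX_r .
\end{equation*}
Combining the two displays gives $\Phi(Y_t) = \Phi(Y_0) + \int_0^t \big(D\Phi \cdot f\big)(Y_r)\,dX_r$, and now the hypothesis $f \circ \Phi = D\Phi \cdot f$ lets me rewrite the integrand as $f(\Phi(Y_r)) = f(W_r)$, yielding $W_t = W_0 + \int_0^t f(W_r)\,dX_r$, which is precisely the statement that $W = \Phi(Y)$ is a solution of (\ref{box: reduced equation of Young}).

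The main obstacle I anticipate is bookkeeping of the variation exponents so that both Lemma \ref{Ito}'s corollary and Lemma \ref{Sub} genuinely apply — in particular checking that $D\Phi(Y)$ and $f(Y)$ land in the right $q$- and $l$-variation spaces with $\tfrac1q, \tfrac1l > 1 - \tfrac1p$, which reduces to the standing assumption $\gamma + 1 > p$ together with the $\gamma$-Hölder regularity of $D\Phi$ and $f$. A secondary point, noted above, is reconciling the $C^2$ hypothesis in the stated chain-rule corollary with the $C^1$ hypothesis in the Proposition; I would either strengthen the hypothesis to match the $Lip(1+\gamma)$ framework used for the vector fields, or observe that the chain rule $d\Phi(Y_t) = D\Phi(Y_t)\,dY_t$ survives under $C^1$-with-Hölder-derivative by the same Taylor-expansion argument as in Lemma \ref{Ito}, the second-order remainder being controlled by $\sum \|X_{t_{i+1}} - X_{t_i}\|^{1+\gamma} \to 0$ when $1 + \gamma > p$. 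Everything else is a direct chaining of the two lemmas and the algebraic identity $f \circ \Phi = D\Phi \cdot f$.
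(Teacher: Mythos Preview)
Your proposal is correct and follows exactly the same route as the paper: apply the chain rule from Lemma~\ref{Ito} (or its Corollary) together with the substitution Lemma~\ref{Sub} to obtain $\Phi(Y_t)-\Phi(Y_0)=\int_0^t D\Phi(Y_s)f(Y_s)\,dX_s$, and then invoke the hypothesis $f\circ\Phi=D\Phi\cdot f$. The paper's proof is the two-line version of what you wrote; your observation about the $C^1$ versus $C^2$ regularity mismatch is a fair point that the paper simply glosses over.
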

\begin{proof}
Applying the Lemma \ref{Ito} and Proposition \ref{Sub} we have that,  
\begin{eqnarray*}
\Phi(Y_t)-\Phi(Y_0) & = & \int_0^t D\Phi(Y_s) f(Y_s)dX_s \\
& = & \int_0^t f(\Phi(Y_s)) dX_s.
\end{eqnarray*}
\end{proof}
Let $\{e_1, \cdots ,e_n \}$ be a basis of $E_2$. Thus $X_s = \sum_{i=1}^n X^i_s e_i$ and we can write the Young equation 
(\ref{box: reduced equation of Young}) as
\begin{equation}\label{young finite}
dY_s = \sum_{i=1}^n f_i (Y_s) \, dX_s^i
\end{equation}
where $f_i : E_1 \rightarrow E_1$ are the vector fields given by $f_i(y)=f(y)(e_i)$ for $i=1, \cdots , n$.

\begin{proposition}\label{cq}
Let  $Y$ be a solution of (\ref{young finite}). Then for all $F \in C^2(E_1; E_3)$, 
\begin{equation}\label{eq}
 F(Y_t)= F(Y_0) +\sum_{i=1}^n \int_0^t  f_iF(Y_s)dX^i_s.
\end{equation} 
\end{proposition}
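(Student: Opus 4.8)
The statement (\ref{eq}) is really just Lemma~\ref{Ito} applied to a path-independent function $F$, combined with the substitution Lemma~\ref{Sub}, so the plan is to bootstrap from those two results. First I would record that since $Y$ solves (\ref{young finite}), we have $Y_t = Y_0 + \int_0^t f(Y_s)\,dX_s$, which in the chosen basis reads $Y_t = Y_0 + \sum_{i=1}^n \int_0^t f_i(Y_s)\,dX^i_s$. By the Corollary following Lemma~\ref{Ito} (the chain rule $dF(Y_s) = DF(Y_s)\,dY_s$ for twice differentiable $F$ and $Y \in \mathcal{V}^p$ with $1\le p<2$), we get
\[
F(Y_t) = F(Y_0) + \int_0^t DF(Y_s)\,dY_s.
\]
Here I should be slightly careful that the Corollary is stated for $g(Z_t)$ with $Z$ the driving path itself; to apply it I would instead invoke Lemma~\ref{Ito} directly with $g$ time-independent (so $h\equiv 0$, $Z$ absent, and the hypothesis $g_t(x)=g_0(x)$ trivially satisfied), which yields exactly $F(Y_t)=F(Y_0)+\int_0^t D_xF(Y_s)\,dX_s$ — except that in our situation the ``driving path'' of $Y$ is not a single $X$ but the Young integral $Y$ itself, so the cleanest route is the chain-rule Corollary applied with $Z=Y$.

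Second, I would substitute the integrator. We have $Y_t = \int_0^t f(Y_s)\,dX_s$ (up to the constant $Y_0$), i.e. $Y$ is of the form $\int_0^\cdot f(Z_r)\,dZ_r$ after identifying appropriately — but more directly, $dY_s = f(Y_s)\,dX_s$, so Lemma~\ref{Sub} (substitution formula) gives
\[
\int_0^t DF(Y_s)\,dY_s = \int_0^t DF(Y_s)\circ f(Y_s)\,dX_s.
\]
Here the integrability hypotheses of Lemma~\ref{Sub} must be checked: $X\in\mathcal{V}^p$, $f(Y)\in\mathcal{V}^{p/\gamma}$ and $DF(Y)\in\mathcal{V}^p$ (since $F\in C^2$ and $Y\in\mathcal{V}^p$, by the Corollary's regularity claim), and the exponent conditions follow from $\gamma+1>p$ exactly as in Section~3's setup. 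Finally, expanding in the basis $\{e_1,\dots,e_n\}$ we have $X_s=\sum_i X^i_s e_i$ and $f(y)(e_i)=f_i(y)$, so $DF(Y_s)\circ f(Y_s)\,dX_s = \sum_{i=1}^n DF(Y_s)\cdot f_i(Y_s)\,dX^i_s = \sum_{i=1}^n (f_iF)(Y_s)\,dX^i_s$, where $f_iF := DF\cdot f_i$ is the directional derivative. Combining the three displays gives (\ref{eq}).

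\textbf{Main obstacle.} The only real subtlety is justifying the chain rule for $F(Y_s)$ when $Y$ is itself a Young integral rather than a raw $p$-variation path: one needs $Y\in\mathcal{V}^p$ (true, since the Young integral $\int_0^\cdot f(Y_s)\,dX_s$ has the same $p$-variation as $X$) and then the Corollary to Lemma~\ref{Ito} applies verbatim with $Z=Y$. After that, the bookkeeping with the basis and the verification of the exponent inequalities for Lemma~\ref{Sub} are routine, using $f\in Lip(\gamma)$, $F\in C^2$, and $\gamma+1>p$.
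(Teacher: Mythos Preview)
Your proposal is correct and follows essentially the same route as the paper: apply the chain rule (Lemma~\ref{Ito} or its Corollary) to obtain $F(Y_t)-F(Y_0)=\int_0^t DF(Y_s)\,dY_s$, then use the substitution Lemma~\ref{Sub} to rewrite this as $\int_0^t DF(Y_s)\,f(Y_s)\,dX_s$, and finally expand in the basis $\{e_i\}$ and identify $DF\cdot f_i = f_iF$. The paper compresses the first two steps into a single line, whereas you spell out the intermediate integral $\int DF(Y_s)\,dY_s$ and the hypothesis checks more carefully, but the argument is the same.
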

\begin{proof}
Applying the Lemma \ref{Ito} and Proposition \ref{Sub} we have that,  
\begin{eqnarray*}
 F(Y_t)-F(Y_0) & = & \int_0^t DF(Y_s) f(Y_s)dX_s \\
& = & \int_0^t DF(Y_s) \sum_{i=1}^n f(Y_s)(e_i) dX^i_s \\
& = & \sum_{i=1}^n  \int_0^t DF(Y_s) f_i(Y_s) dX^i_s \\
& = & \sum_{i=1}^n  \int_0^t f_iF(Y_s) dX^i_s.
\end{eqnarray*}
\end{proof}
\begin{definition}
A vector field $g \in C^2(E_1; E_1)$ is an infinitesimal symmetry of (\ref{box: reduced equation of Young}) if its flow $\Phi_t$ is a flow of symmetries of 
(\ref{box: reduced equation of Young}).
\end{definition}

The following Corollaries provide conditions for a transformation be a conserved quantity or a symmetry in terms of the 
vector fields that are driven the Young equation.
\begin{corollary}
Let $F \in C^1(E_1; E_3)$ such that 
 $f_i F = 0$ for $i=1,\cdots, n$.
Then $F$ is a conserved quantity of (\ref{young finite}). 
\end{corollary}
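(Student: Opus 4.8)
The plan is to derive this as an immediate consequence of Proposition \ref{cq}. That proposition states that for any solution $Y$ of the Young system (\ref{young finite}) and any $F \in C^2(E_1;E_3)$ one has
\[
F(Y_t) = F(Y_0) + \sum_{i=1}^n \int_0^t f_iF(Y_s)\,dX^i_s.
\]
First I would observe that the hypothesis $f_i F = 0$ for $i = 1, \ldots, n$ means precisely that the integrand $f_iF(Y_s)$ vanishes identically along the trajectory, for every $i$. Substituting this into the formula above, each Young integral $\int_0^t f_iF(Y_s)\,dX^i_s$ is the integral of the zero path against $X^i$, hence is zero. Therefore $F(Y_t) = F(Y_0)$ for all $t \in [0,T]$, which is exactly the statement that $F$ is a conserved quantity of (\ref{young finite}).

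One small technical point to address is the regularity mismatch: the corollary only assumes $F \in C^1$, whereas Proposition \ref{cq} (and Lemma \ref{Ito} behind it) is stated for $F \in C^2$. I would handle this either by remarking that the hypothesis $f_iF = 0$ is a first-order condition so the natural setting is $C^1$, and noting that when $F$ is merely $C^1$ the same expansion still goes through because the second-order Taylor remainder term in the proof of Lemma \ref{Ito} can be replaced by a first-order argument (the increment $F(Y_{t_{i+1}}) - F(Y_{t_i}) - DF(Y_{t_i})(X_{t_{i+1}} - X_{t_i})$ is still $o$ of the $p$-variation increments since $1 \le p < 2$ and $DF$ is continuous, hence uniformly continuous on the compact range of $Y$). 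Alternatively, and more cleanly for the purposes of this corollary, one simply states it for $F \in C^2$ to match the hypotheses already in place; but since the statement as written says $C^1$, I would add the one-line justification that the argument of Proposition \ref{cq} only requires enough regularity for the fundamental-theorem-of-calculus expansion, which $C^1$ together with $1 \le p < 2$ provides.

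I do not expect any genuine obstacle here: the entire content is a direct substitution into an already-proved identity. If anything, the only thing to be careful about is making sure the reader understands that $f_iF$ denotes the directional derivative $DF \cdot f_i$ (consistent with the notation fixed in the proof of Proposition \ref{cq}), so that "$f_iF = 0$" and "the integrand vanishes" are literally the same statement. The write-up would thus be two sentences: invoke Proposition \ref{cq}, then note that all integrands vanish by hypothesis.

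\begin{proof}
By Proposition \ref{cq}, for any solution $Y$ of (\ref{young finite}) and any $F \in C^2(E_1;E_3)$ we have
\[
F(Y_t) = F(Y_0) + \sum_{i=1}^n \int_0^t f_iF(Y_s)\,dX^i_s, \qquad t \in [0,T].
\]
(The identity remains valid for $F \in C^1$: since $1 \le p < 2$, the first-order Taylor expansion used in the proof of Lemma \ref{Ito} suffices, the remainder being controlled by the uniform continuity of $DF$ on the compact range of $Y$ together with $\sum_i \|X_{t_{i+1}} - X_{t_i}\|^{p} \to$ the finite $p$-variation.) By hypothesis $f_iF = DF \cdot f_i = 0$ identically, so each integrand $f_iF(Y_s)$ is the zero path and every Young integral on the right-hand side vanishes. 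Hence $F(Y_t) = F(Y_0)$ for all $t \in [0,T]$, i.e. $F$ is a conserved quantity of (\ref{young finite}).
\end{proof}
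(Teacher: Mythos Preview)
Your proposal is correct and matches the paper's approach: the corollary is stated there without an explicit proof, immediately after Proposition~\ref{cq}, and is clearly intended as the direct consequence you describe (the integrands $f_iF(Y_s)$ vanish, so $F(Y_t)=F(Y_0)$). Your observation about the $C^1$/$C^2$ mismatch is a fair point that the paper itself glosses over.
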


\begin{corollary}
Let $\Phi \in C^1(E_1; E_1)$ such that 
 $\Phi_* f_i  = f_i$ for $i=1,\cdots, n$.
Then $\Phi$ is a symmetry of (\ref{young finite}). 
\end{corollary}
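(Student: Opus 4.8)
The plan is to deduce this from the Proposition already established for the homogeneous equation (\ref{box: reduced equation of Young}), by observing that the $n$ pushforward identities $\Phi_* f_i = f_i$ are jointly equivalent to the single operator-valued identity $f\circ\Phi = D\Phi\cdot f$. Recall that for a $C^1$ diffeomorphism $\Phi$ of $E_1$ and a vector field $V$ on $E_1$, the pushforward $\Phi_*V$ is characterized by $\Phi_*V = V$ if and only if $D\Phi(x)\,V(x) = V(\Phi(x))$ for every $x\in E_1$.

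First I would unwind the hypothesis. Since $f_i(y)=f(y)(e_i)$ for the chosen basis $\{e_1,\dots,e_n\}$ of $E_2$, the condition $\Phi_* f_i = f_i$ for each $i$ says that $D\Phi(x)\big(f(x)(e_i)\big) = f(\Phi(x))(e_i)$ for all $x$ and all $i$. Because $\{e_i\}$ spans $E_2$ and both $D\Phi(x)\circ f(x)$ and $f(\Phi(x))$ are linear maps $E_2\to E_1$, these $n$ equalities together are equivalent to $D\Phi(x)\circ f(x) = f(\Phi(x))$ in $\mathrm{Hom}(E_2,E_1)$ for every $x$, that is, to $D\Phi\cdot f = f\circ\Phi$. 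This is precisely the hypothesis of the Proposition proved above, whose conclusion is that $\Phi$ is a symmetry of (\ref{box: reduced equation of Young}); and (\ref{young finite}) is just (\ref{box: reduced equation of Young}) rewritten in the basis $\{e_i\}$, so $\Phi$ is a symmetry of (\ref{young finite}).

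For a self-contained argument one can instead argue directly, mimicking the proof of Proposition \ref{cq}. Let $Y$ be a solution of (\ref{young finite}) with $Y_0=y_0$; then $Y\in\mathcal{V}^p$ and $dY_s=\sum_{i=1}^n f_i(Y_s)\,dX_s^i$. Applying Lemma \ref{Ito} to the time-independent map $g=\Phi$ (so $h\equiv 0$) gives $\Phi(Y_t)=\Phi(Y_0)+\int_0^t D\Phi(Y_s)\,dY_s$, and then the substitution formula (Lemma \ref{Sub}) yields $\int_0^t D\Phi(Y_s)\,dY_s = \sum_{i=1}^n\int_0^t D\Phi(Y_s)f_i(Y_s)\,dX_s^i$. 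Substituting $D\Phi(Y_s)f_i(Y_s)=f_i(\Phi(Y_s))$ from the hypothesis, we get $\Phi(Y_t)=\Phi(Y_0)+\sum_{i=1}^n\int_0^t f_i(\Phi(Y_s))\,dX_s^i$, i.e. $\Phi(Y)$ solves (\ref{young finite}) with initial condition $\Phi(y_0)$. Thus $\Phi$ carries solutions to solutions, which is the definition of a symmetry.

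The only genuinely delicate point is the regularity needed to invoke Lemma \ref{Ito}, which is stated for maps twice continuously differentiable in the space variable, whereas here $\Phi$ is only assumed $C^1$; this is the same gap already present implicitly in the cited Proposition and in Proposition \ref{cq}. One resolves it either by invoking that Proposition as a black box, or by reading the hypothesis as $\Phi$ of class $C^{1+\gamma}$ with $\gamma+1>p$ — the natural regularity in the Young setting, which suffices for the Taylor-expansion step of Lemma \ref{Ito}. Beyond this, the proof is a routine transcription of the homogeneous case and requires no new estimate.
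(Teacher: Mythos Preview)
Your proposal is correct; the paper states this Corollary without proof, treating it as immediate from the earlier Proposition ($f\circ\Phi = D\Phi\cdot f \Rightarrow \Phi$ is a symmetry), and your first paragraph makes exactly this reduction explicit by unwinding $\Phi_* f_i = f_i$ in the chosen basis. Your self-contained direct argument via Lemmas \ref{Ito} and \ref{Sub}, and your observation about the $C^1$ versus $C^2$ regularity gap, go beyond what the paper records but are entirely in line with its approach.
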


\begin{proposition}
Let $g \in C^2(E_1; E_1)$ be a vector field such that $[g, f_i]=0$ for $i=1,\cdots ,n$. Then $g$ is an infinitesimal symmetry of (\ref{young finite}).   
\end{proposition}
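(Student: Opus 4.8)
The plan is to reduce the statement to the Corollary immediately preceding it, by showing that every time-$t$ map of the flow of $g$ leaves each $f_i$ invariant under pushforward. First I would recall that, since $g\in C^2(E_1;E_1)$, it generates a (local) flow $\{\Phi_t\}$, a one-parameter family of $C^2$ diffeomorphisms of $E_1$ with $\Phi_0=\mathrm{id}$ and $\partial_t\Phi_t(x)=g(\Phi_t(x))$; differentiating this relation in the space variable gives $\partial_t D\Phi_t(x)=Dg(\Phi_t(x))\,D\Phi_t(x)$, and in particular each $D\Phi_t(x)$ remains invertible. If $g$ is not complete one simply works on the maximal interval of existence, which suffices on $[0,T]$.

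Next, fix $i\in\{1,\dots,n\}$ and consider the $E_1$-valued map $\psi_i(t,x)=D\Phi_t(x)\,f_i(x)-f_i(\Phi_t(x))$, noting that $\psi_i(0,x)=0$. Differentiating in $t$, using the two identities above and substituting $D\Phi_t(x)f_i(x)=\psi_i(t,x)+f_i(\Phi_t(x))$, one obtains the linear equation
\begin{equation*}
\partial_t\psi_i(t,x)=Dg(\Phi_t(x))\,\psi_i(t,x)-[g,f_i](\Phi_t(x)),
\end{equation*}
where $[g,f_i]=Df_i\cdot g-Dg\cdot f_i$. By hypothesis $[g,f_i]=0$, so $\psi_i$ satisfies a homogeneous linear ODE in $t$ with zero initial datum; uniqueness (Gronwall) forces $\psi_i\equiv 0$, i.e. $f_i(\Phi_t(x))=D\Phi_t(x)f_i(x)$ for all $t$ and $x$, which is precisely $(\Phi_t)_*f_i=f_i$.

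Finally I would invoke the preceding Corollary: since $\Phi_t\in C^1(E_1;E_1)$ and $(\Phi_t)_*f_i=f_i$ for $i=1,\dots,n$, each $\Phi_t$ is a symmetry of (\ref{young finite}). Hence $\{\Phi_t\}$ is a flow of symmetries and, by definition, $g$ is an infinitesimal symmetry of (\ref{young finite}).

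I expect the main obstacle to be the middle step: performing the differentiation of $\psi_i$ carefully in the Banach-space setting, in particular justifying the interchange of $\partial_t$ with the space derivative (which is where the $C^2$ regularity of $g$, hence the $C^2$ dependence of $\Phi_t$, is used) and checking that the regularity hypothesis of the quoted Corollary is indeed met by the flow maps. The remainder is the classical fact that a vector field is invariant under the flow of $g$ if and only if it commutes with $g$, transcribed into the present framework.
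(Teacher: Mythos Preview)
Your proposal is correct and follows essentially the same route as the paper: both arguments show that each flow map $\Phi_t$ satisfies $(\Phi_t)_*f_i=f_i$ (equivalently $\Phi_t^*f_i=f_i$) by differentiating in $t$ and using $[g,f_i]=0$, then invoke the preceding Corollary. The only difference is presentational: the paper quotes the Lie-derivative identity $\partial_t\Phi_t^*f_i=\Phi_t^*[g,f_i]=0$ in one line, whereas you unfold this into the explicit linear ODE for $\psi_i$ and appeal to Gronwall, which is the same computation written out in coordinates.
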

\begin{proof}
Let $\Phi_t$ be the flow of $g$. Then
\[
 \partial_t \Phi^*_t f_i = \partial_s |_{s=0} \Phi^*_{t+s} f_i = \Phi^*_t [g,f_i]=0 
\]
so $\Phi^*_t f_i$ is constant in $t$. Thus $f_i =\Phi^*_t f_i$.
\end{proof}

The following Theorem is an adaptation of the H. Kunita results about decomposition of solutions of stochastic differential 
equations to Young systems, see  \cite{Kunita1}.

\begin{theorem}
Let $p\in[1,2)$, $p<\gamma$, 
$U \in \mathcal{V}^{p}([0,T],E_{0})$, $X \in \mathcal{V}^{p}([0,T],E_{1})$, $f\in
\mathrm{Lip}^{\gamma}(E,L(E_{0},E))$ and $g\in
\mathrm{Lip}^{\gamma}(E,L(E_{1},E))$. Let $V$ and $Y$ be solutions of $dV = f(V)dU$ and $dY = g(Y) dX$. Then $Z= Y \circ V$ satisfies
\[
 dZ = g(Z)dX + Y_*f(Z)dU.
\]
\end{theorem}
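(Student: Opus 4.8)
The plan is to read $Z=Y\circ V$ as the evaluation $Z_t=Y_t(V_t)$, where $Y_t=I_g(\cdot,X)$ is the solution \emph{flow} of $dY=g(Y)dX$ (so $Y_0=\mathrm{id}$ and, for each $t$, $Y_t\colon E\to E$ is a $C^2$ diffeomorphism, by the regularity theory of Young equations under the stated $\mathrm{Lip}^\gamma$ hypotheses) and $V_t=I_f(v_0,U)$ is the solution of $dV=f(V)dU$ started at a fixed $v_0$. With this convention $Y_t^{-1}(Z_t)=V_t$, hence $(Y_t)_*f(Z_t)=D_xY_t\big(Y_t^{-1}(Z_t)\big)f\big(Y_t^{-1}(Z_t)\big)=D_xY_t(V_t)\,f(V_t)$ and $g(Z_t)=g(Y_t(V_t))$; so the assertion $dZ=g(Z)dX+Y_*f(Z)dU$ is equivalent to
\[
Y_t(V_t)=v_0+\int_0^t g\big(Y_s(V_s)\big)\,dX_s+\int_0^t D_xY_s(V_s)\,f(V_s)\,dU_s .
\]

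The first step is to put the flow $Y_t$ into the form required by Lemma~\ref{Ito}. By definition of the flow, for each $x$,
\[
Y_t(x)=x+\int_0^t g\big(Y_s(x)\big)\,dX_s,
\]
which is exactly the hypothesis $g_t(x)=g_0(x)+\int_0^t h_s(x)\,dZ_s$ of Lemma~\ref{Ito} with $g_t:=Y_t$, $h_s(x):=g(Y_s(x))$ and integrator $X$. I would then check the regularity demanded by that lemma: $x\mapsto Y_t(x)$ twice continuously differentiable; $s\mapsto h_s(x)=g(Y_s(x))$ of the required regularity in time, which follows from $X\in\mathcal V^p$, the $\mathrm{Lip}^\gamma$ regularity of $g$ with $p<\gamma$, and the fact that $s\mapsto Y_s(x)$ has finite $p$-variation (indeed the same as $X$); and the spatial derivative $s\mapsto D_xY_s$ of finite $p$-variation, which holds because $D_xY$ solves the linear Young equation obtained by differentiating $dY=g(Y)dX$ in the initial condition. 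This verification is the bulk of the argument.

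Granting this, applying Lemma~\ref{Ito} to the field $Y_t$ along the path $V_t$ gives at once
\[
Y_t(V_t)=Y_0(V_0)+\int_0^t g\big(Y_s(V_s)\big)\,dX_s+\int_0^t D_xY_s(V_s)\,dV_s ,
\]
with $Y_0(V_0)=v_0$. Since $V$ solves $dV=f(V)dU$, the substitution formula (Lemma~\ref{Sub}), applied with driving path $U$, with the role of $f$ there played by $f(V_\cdot)$ and the role of $g$ by $D_xY_\cdot(V_\cdot)$ (so that its auxiliary path is $\int_0^\cdot f(V_r)\,dU_r=V_\cdot-v_0$), rewrites the last integral as $\int_0^t D_xY_s(V_s)\,f(V_s)\,dU_s$. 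Inserting the identifications $g(Y_s(V_s))=g(Z_s)$ and $D_xY_s(V_s)f(V_s)=(Y_s)_*f(Z_s)$ recorded above yields $Z_t=v_0+\int_0^t g(Z_s)\,dX_s+\int_0^t (Y_s)_*f(Z_s)\,dU_s$, i.e. $dZ=g(Z)dX+Y_*f(Z)dU$.

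The main obstacle is precisely the regularity bookkeeping for the flow: one needs $Y_t$ twice differentiable in space with first and second spatial derivatives of finite $p$-variation, together with a time-regularity of $s\mapsto g(Y_s(\cdot))$ compatible with the hypotheses of Lemma~\ref{Ito} (if one insists on the H\"older-in-time form stated there, a preliminary reparametrisation of $X$, or a routine $p$-variation-control version of Lemma~\ref{Ito}, is needed). Once these are secured, the identity is a direct concatenation of Lemmas~\ref{Ito} and~\ref{Sub}.
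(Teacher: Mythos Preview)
Your argument is correct and follows essentially the same route as the paper: write the flow as $Y_t(x)=x+\int_0^t g(Y_s(x))\,dX_s$, apply Lemma~\ref{Ito} along $V_t$, then use Lemma~\ref{Sub} to replace $dV_s$ by $f(V_s)\,dU_s$, and finally identify $D_xY_s(V_s)f(V_s)=(Y_s)_*f(Z_s)$. Your discussion of the regularity bookkeeping (in particular the H\"older-in-time hypothesis of Lemma~\ref{Ito}) is more careful than the paper's own proof, which simply invokes the two lemmas without comment.
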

\begin{proof}
By assumption,
\begin{equation}
Y_t(x) = x + \int_0^t g(Y_s(x))dX_s.  
\end{equation}
Combining Lemma \ref{Ito} and Proposition \ref{Sub} we have that 
\begin{eqnarray*}
Z_t & = & Y_t(V_t)  \\
& = & x + \int_0^t g(Y_s(V_s))dX_s + \int_0^t D_xY_s(V_s)dV_s \\
& = & x + \int_0^tg(Z_s)dX_s + \int_0^t D_xY_s(V_s)f(V_s)dU_s \\ 
& = & x + \int_0^tg(Z_s)dX_s + \int_0^t (D_xY_s\cdot f ) \circ Y_s^{-1} (Z_s)dU_s \\
& = & x + \int_0^tg(Z_s)dX_s + \int_0^t (Y_s)_* f(Z_s)dU_s
\end{eqnarray*}

\end{proof}

\subsection{First order Young partial differential equations}

In this section we deal with a class of evolution first order differential equations driven by a path with finite $p$-variation, with  $p\in[1,2 )$. 
Let $X \in \mathcal{V}^{p}([0,T],\mathbb{R}^n)$, $\phi : \mathbb{R}^d \rightarrow \mathbb{R}$ and $F^j : [0,T] \times \mathbb{R}^d \times \mathbb{R} \times \mathbb{R}^d \rightarrow \mathbb{R}$ 
, $j=1, \cdots , n$. We will consider the following equation

\begin{equation}\label{transport}
 \left \{
\begin{array}{lll}
    du_t = \sum_{j=1}^n F^j (t, x, u_t ,D u_t )dX^j_t\\
 u_0 = \phi.
\end{array}
\right .
\end{equation}
We assume that  $F^j$ are continuous, continuously differentiable in the first variavel and for each $t$, 
$F^j (t, \cdot) \in \mathcal{C}^{3,\alpha}(\mathbb{R}^{2d+1})$.
  
\begin{definition}
Given $\phi \in \mathcal{C}^1(\mathbb{R}^d)$ a local field $u_t(x)$, $x \in \mathbb{R}^d$ $t \in [0, T(x))$ with values in $\mathbb{R}$ 
is called a local solution of (\ref{transport}) with the initial condition $u_0 = \phi$, if $0< T(x) \leq T$ and 
\[
 u(t,x) = \phi(x) + \sum_{j=1}^n \int_0^t F^j (r , x , u(r,x), D_x u(r,x)) dX^j_r
\]
for all $(t,x)$ such that $t< T(x)$.  
\end{definition}
We use the following notations $F_{x_i}= D_{x_i}F$, $F_{p_i}= D_{p_i}F$,  $F_x =(F_{x_1}, \cdots , F_{x_d})$ and $F_p =(F_{p_1}, \cdots , F_{p_d})$.

The characteristic Young system associated with (\ref{transport}) is defined by
\begin{equation}\label{characteristic}
 \left \{
\begin{array}{lll}
    da_t & = & -\sum_{j=1}^n F_p^j (t, a_t, b_t ,c_t )dX^j_t\\
 db_t & = & \sum_{j=1}^n \{ F^j (t, a_t, b_t ,c_t )-F_p^j (t, a_t, b_t ,c_t ) \cdot c_t \} dX^j_t \\
dc_t & = & \sum_{j=1}^n \{ F_x^j (t, a_t, b_t ,c_t ) + F_u^j (t, a_t, b_t ,c_t ) b_t \} dX^j_t
\end{array}
\right .
\end{equation}
Given $(x,u,p) \in \mathbb{R}^d \times \mathbb{R} \times \mathbb{R}^d$ there is an unique solution $(a_t (x,u,p), b_t (x,u,p) , c_t (x,u,p))$ 
starting from $(x,u,p)$ at time $t=0$ with time life $[0,T(x,u,p))$.
\begin{theorem}\label{T1}
Let $u$ be a local solution of (\ref{transport}) such that $u(t, \cdot) \in C^3(\mathbb{R}^d)$  for all $t \in [0,T]$. Assume that $a_t$ solves the equation
\begin{equation}\label{E1}
 da_t  =  -\sum_{j=1}^n F_p^j (t, a_t, b_t ,c_t )dX^j_t
\end{equation}
where $b_t = u(t, a_t)$ and $c_t = D_xu (t, a_t)$. Then $(a_t, b_t , c_t)$ solves the characteristic system (\ref{characteristic}).   
\end{theorem}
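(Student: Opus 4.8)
The plan is to differentiate the relations $b_t = u(t,a_t)$ and $c_t = D_x u(t,a_t)$ along the path $X$, using the Itô--Kunita--Ventzel formula (Lemma \ref{Ito}) together with the substitution formula (Lemma \ref{Sub}), and to check that the resulting $p$-variation differentials match the right-hand sides of the last two equations in \eqref{characteristic}. Throughout we use that $u(t,\cdot)\in C^3$ so that $g(t,x):=u(t,x)$ and $h(t,x):=D_x u(t,x)$ are twice continuously differentiable in $x$, and that by hypothesis $g_t(x) = \phi(x) + \sum_j \int_0^t F^j(r,x,u(r,x),D_x u(r,x))\,dX^j_r$, i.e. $g$ has the representation $g_t(x) = g_0(x) + \int_0^t \tilde h_s(x)\,dX_s$ with $\tilde h_s(x) = \bigl(F^j(s,x,u(s,x),D_x u(s,x))\bigr)_{j}$ taking values in $L(\mathbb{R}^n,\mathbb{R})$, which is $\frac1q$-Hölder in $s$ because $F^j$ is continuous in $t$ and $u,D_xu$ are continuous; one also needs $\tilde h\in C(\mathbb{R}^d, C_H^{1/q})$, which follows from the regularity assumptions on $F^j$ and the $C^3$-regularity of $u(t,\cdot)$. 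Differentiating the $x$-equation is trivial: \eqref{E1} \emph{is} the first line of \eqref{characteristic} by the definition of $b_t,c_t$, so nothing is to be done there.

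For $b_t = g_t(a_t)$: since $a$ solves \eqref{E1}, $a\in\mathcal{V}^p([0,T],\mathbb{R}^d)$, so Lemma \ref{Ito} applies with $X\rightsquigarrow a$ and gives
\[
b_t = g_t(a_t) = g_0(a_0) + \int_0^t \tilde h_s(a_s)\,dX_s + \int_0^t D_x g_s(a_s)\,da_s .
\]
The first integral is $\sum_j \int_0^t F^j(s,a_s,b_s,c_s)\,dX^j_s$ by the definition of $\tilde h$, $b_s$, $c_s$. For the second, $D_x g_s(a_s) = D_x u(s,a_s) = c_s$, and using \eqref{E1} together with the substitution formula (Lemma \ref{Sub}) to rewrite $\int_0^t c_s\,da_s = -\sum_j \int_0^t c_s\cdot F_p^j(s,a_s,b_s,c_s)\,dX^j_s$. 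Adding the two pieces yields exactly $db_t = \sum_j\{F^j - F_p^j\cdot c_t\}\,dX^j_t$, the second line of \eqref{characteristic}.

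For $c_t = h_t(a_t)$ with $h_t(x) = D_x u(t,x)$: differentiating the defining integral equation for $u$ in $x$ (legitimate since $F^j(t,\cdot)\in C^{3,\alpha}$ and $u(t,\cdot)\in C^3$, so one may differentiate under the Young integral sign — this is the step that needs the most care) gives
\[
h_t(x) = D_x\phi(x) + \sum_j \int_0^t \bigl\{ F_x^j(r,x,u,D_xu) + F_u^j(r,x,u,D_xu)\,D_x u(r,x) + F_p^j(r,x,u,D_xu)\,D_x^2 u(r,x)\bigr\}\,dX^j_r,
\]
so $h$ has a representation $h_t(x) = h_0(x) + \int_0^t \hat h_s(x)\,dX_s$ of the form required by Lemma \ref{Ito}. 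Applying Lemma \ref{Ito} to $h_t(a_t)$ and then \eqref{E1}:
\[
c_t = h_t(a_t) = h_0(a_0) + \int_0^t \hat h_s(a_s)\,dX_s + \int_0^t D_x h_s(a_s)\,da_s ,
\]
where $D_x h_s(a_s) = D_x^2 u(s,a_s)$ and $\int_0^t D_x^2 u(s,a_s)\,da_s = -\sum_j\int_0^t D_x^2 u(s,a_s)\,F_p^j(s,a_s,b_s,c_s)\,dX^j_s$ by Lemma \ref{Sub}. The $D_x^2 u$–terms in $\hat h_s(a_s)$ and in $\int D_x h\,da$ cancel, leaving $dc_t = \sum_j\{F_x^j(t,a_t,b_t,c_t) + F_u^j(t,a_t,b_t,c_t)\,b_t\}\,dX^j_t$, the third line of \eqref{characteristic}. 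The main obstacle is the justification that $u$ being a $C^3$-in-space local solution lets one differentiate the integral identity twice in $x$ while keeping the Young integrals well defined and the integrands of the correct finite $q$-variation in time and continuous (in the $C_H^{1/q}$-topology) in the spatial variable; once that regularity bookkeeping is in place, the three computations above are routine applications of Lemmas \ref{Ito} and \ref{Sub}.
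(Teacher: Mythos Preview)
Your argument follows exactly the paper's route: apply Lemma~\ref{Ito} to $u(t,a_t)$ and then to $D_x u(t,a_t)$, convert the resulting $da$-integrals via Lemma~\ref{Sub} and \eqref{E1}, and observe the cancellation of the Hessian terms in the $c_t$-computation. One small slip (which the statement of \eqref{characteristic} itself contains): after the cancellation the surviving coefficient is $F_u^j\,c_t$, not $F_u^j\,b_t$, since $D_x u(r,a_r)=c_r$; the paper's own derivation in \eqref{E22} correctly produces $F_u^j\,c_r^i$.
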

 \begin{proof}
By Lemma \ref{Ito},
\begin{eqnarray*}
 u(t,a_t) & = & \phi(a_0) + \sum_{j=1}^n \int_0^t F^j (r , a_r , u(r,a_r), D_x u(r,a_r)) dX^j_r \\
& & + \int_0^t D_x u(r,a_r)da_r.
\end{eqnarray*}
Thus
\begin{equation}\label{E0} 
b_t = \phi(a_0) + \sum_{j=1}^n \int_0^t F^j (r , a_r , b_r, c_r) dX^j_r + \int_0^t D_x u(r,a_r)da_r.
\end{equation}
From Lemma \ref{Sub} and (\ref{E1}), we have 
\begin{eqnarray}\label{E2}
\int_0^t D_x u(r,a_r)da_r & = & -\sum_{j=1}^n \int_0^t D_x u(r,a_r) F_p^j (r, a_r, b_r ,c_r )dX^j_r \nonumber \\
 & = & -\sum_{j=1}^n \int_0^t F_p^j (r, a_r, b_r ,c_r ) \cdot c_t dX^j_r.
\end{eqnarray}
Combining (\ref{E0}) with (\ref{E2}) we obtain
\begin{equation}\label{E11}
 b_t = \phi(a_0) + \sum_{j=1}^n \int_0^t \{ F^j (r , a_r , b_r, c_r) - F_p^j (r, a_r, b_r ,c_r ) \cdot c_t \}dX^j_r .
\end{equation}
Our next goal is to determine the equation for $c_t$. We observe that
\begin{eqnarray*}
u_{x_i}(t,x) & = & \phi_{x_i}(x)+ \sum_{j=1}^n \int_0^t D_{x_i}F^j (r , x , u(r,x), D_x u(r,x)) dX^j_r \\
  & = & \phi_{x_i}(x)+ \sum_{j=1}^n \int_0^t \{ F_{x_i}^j (r , x , u(r,x), D_x u(r,x)) \\
& & + F_{u}^j (r , x , u(r,x), D_x u(r,x))u_{x_i}(r, x) \\
   & & + \sum_{l=1}^d F^j_{p_l}(r,x,u(r,x), D_xu(r,x))u_{x_l x_i}(r,x) \} dX^j_r.
\end{eqnarray*}
By Lemma \ref{Ito} and definitions, 
\begin{eqnarray}\label{E4}
 c^i_t & = & \phi_{x_i}(a_0) + \sum_{j=1}^n \int_0^t \{ F_{x_i}^j (r ,a_r , b_r, c_r) + F_{u}^j(r ,a_r , b_r, c_r)c^i_r \nonumber \\
 & & + \sum_{l=1}^d F^j_{p_l}(r ,a_r , b_r, c_r) u_{x_l x_i}(r,a_r) \} dX^j_r + \int_0^t D_x u_{x_i}(r,a_r)da_r
\end{eqnarray}
 where $c^i_t= u_{x_i}(t, a_t)$. 
 
From Lemma \ref{Sub} and (\ref{E1}), we have 
\begin{eqnarray}\label{E5}
\int_0^t D_x u_{x_i}(r,a_r)da_r & = & -\sum_{j=1}^n \int_0^t D_x u_{x_i}(r,a_r) F_p^j (r, a_r, b_r ,c_r )dX^j_r \nonumber \\
 & & -\sum_{j=1}^n \int_0^t \sum_{l=1}^d F^j_{p_l}(r ,a_r , b_r, c_r) u_{x_l x_i}(r,a_r) dX^j_r.
\end{eqnarray}
 
Combining (\ref{E4}) with (\ref{E5}) we obtain
\begin{equation}\label{E22}
  c^i_t  =  \phi_{x_i}(a_0) + \sum_{j=1}^n \int_0^t \{ F_{x_i}^j (r ,a_r , b_r, c_r) + F_{u}^j(r ,a_r , b_r, c_r)c^i_r \}dX^j_r .
\end{equation}
This is
\begin{equation}\label{E22}
  c_t  =  \phi_{x_i}(a_0) + \sum_{j=1}^n \int_0^t \{ F_{x}^j (r ,a_r , b_r, c_r) + F_{u}^j(r ,a_r , b_r, c_r)c_r \}dX^j_r .
\end{equation}

 \end{proof}

Following H. Kunita \cite{Kunita2} we define $\overline{a}_t(x)= a_t(x, \phi(x), D\phi(x))$, $\overline{b}_t(x))= b_t(x, \phi(x), D\phi(x))$ and 
$\overline{c}_t(x) = c_t(x, \phi(x), D\phi(x))$ for $t \in [0,\overline{T}(x)$ where $\overline{T}(x)=T(x, \phi(x), D\phi(x))$.
We observe that $\overline{a}_t : \{ x : \overline{T}(x) >t \} \rightarrow \mathbb{R}^d$ is not a diffeomorphism in general, since $D \overline{a}_t(x)$ 
can be singular at some $t< \overline{T}(x)$. 
We define 
\begin{equation}
 \tau(x) = \inf \{ t>0 : \det D \overline{a}_t(x) =0 \} \wedge \overline{T}(x)
\end{equation}
and  its adjoint is given by   
\begin{equation}
 \sigma(y) = \inf \{ t>0 : y \notin \overline{a}_t(\{ \tau > t \})  \}.
\end{equation}
The proofs of the following two Lemmas are an easy adaptation of Lemma 2.1 and Lemma 3.3 of \cite{Kunita2}. 
\begin{lemma}\label{L1}
 The application $\overline{a}_t : \{ x : \overline{T}(x) >t \} \rightarrow \mathbb{R}^d$  is a diffeomorphism. For $t < \sigma(y)$, the inverse 
$\overline{a}^{-1}_t$ satisfies 
\begin{equation}
 d \overline{a}^{-1}_t(y) = \sum_{j=1}^n D\overline{a}_t(\overline{a}^{-1}_t(y))^{-1} F^j_p(t, y , 
\overline{b}_t \circ\overline{a}^{-1}_t(y) , \overline{c}_t \circ\overline{a}^{-1}_t(y))dX^j_t.
\end{equation}
\end{lemma}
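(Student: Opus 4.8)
The plan is to establish the two assertions in turn: that $\overline a_t$ is a diffeomorphism onto an open subset of $\mathbb R^d$, and then, fixing $y$ and $t<\sigma(y)$, to derive the Young differential equation satisfied by $s\mapsto\overline a_s^{-1}(y)$ on $[0,t]$.

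For the diffeomorphism statement, on $\{x:\tau(x)>t\}$ the Jacobian $D\overline a_t(x)$ is nonsingular by the very definition of $\tau$, so $\overline a_t$ is a local diffeomorphism there by the inverse function theorem (the smoothness of $x\mapsto(\overline a_t(x),\overline b_t(x),\overline c_t(x))$ following from the smooth dependence on the initial data of (\ref{characteristic}) and from the regularity of $\phi$). Global injectivity is where Theorem~\ref{T1} enters: by Theorem~\ref{T1} and uniqueness for (\ref{characteristic}) one has $\overline b_t(x)=u(t,\overline a_t(x))$ and $\overline c_t(x)=D_xu(t,\overline a_t(x))$, so $\overline a_t(x_1)=\overline a_t(x_2)$ forces the full triples $(\overline a_t,\overline b_t,\overline c_t)$ to agree at $x_1$ and $x_2$; running (\ref{characteristic}) backwards from time $t$ (backward uniqueness) then yields $(x_1,\phi(x_1),D\phi(x_1))=(x_2,\phi(x_2),D\phi(x_2))$, i.e.\ $x_1=x_2$. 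This reproduces Kunita's Lemma~2.1; the remaining bookkeeping (openness of $\overline a_t(\{\tau>t\})$, and that $t<\sigma(y)$ is exactly what keeps $y$ in the image at all times $\le t$) is routine.

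For the differential equation, set $\psi_s:=\overline a_s^{-1}(y)$, well defined for $s\in[0,t]$ precisely because $t<\sigma(y)$, and $A_s:=D_x\overline a_s(\psi_s)$, an invertible matrix path. By the flow property of (\ref{characteristic}) and the $C^1$-dependence of its solutions on the initial point --- whose derivative satisfies a linear Young equation and hence has finite $p$-variation --- both $s\mapsto\psi_s$ and $s\mapsto A_s$ are of finite $p$-variation on each $[0,t]$ with $t<\sigma(y)$. Since $\overline a_s(x)=x-\sum_{j=1}^n\int_0^s F_p^j(r,\overline a_r(x),\overline b_r(x),\overline c_r(x))\,dX_r^j$, Lemma~\ref{Ito} applied with the substituted path $s\mapsto\psi_s$, together with $\overline a_0=\mathrm{id}$ and $\overline a_s(\psi_s)=y$, $\overline b_s(\psi_s)=\overline b_s\circ\overline a_s^{-1}(y)$, $\overline c_s(\psi_s)=\overline c_s\circ\overline a_s^{-1}(y)$, gives
\[
y=\overline a_t(\psi_t)=y-\sum_{j=1}^n\int_0^t F_p^j\!\left(r,y,\overline b_r\circ\overline a_r^{-1}(y),\overline c_r\circ\overline a_r^{-1}(y)\right)dX_r^j+\int_0^t A_r\,d\psi_r .
\]
Hence, with $R_s:=\int_0^s A_r\,d\psi_r$, we have $R_s=\sum_{j}\int_0^s F_p^j(r,y,\overline b_r\circ\overline a_r^{-1}(y),\overline c_r\circ\overline a_r^{-1}(y))\,dX_r^j$. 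Applying the substitution formula (Lemma~\ref{Sub}) to $R$ with integrand $A_r^{-1}$ in two ways --- against $d\psi_r$, giving $\int_0^t A_r^{-1}\,dR_r=\int_0^t A_r^{-1}A_r\,d\psi_r=\psi_t-\psi_0$, and against the integrators $X^j$, giving $\int_0^t A_r^{-1}\,dR_r=\sum_j\int_0^t A_r^{-1}F_p^j(\cdots)\,dX_r^j$ --- and using $\psi_0=y$ together with $A_r^{-1}=D\overline a_r(\overline a_r^{-1}(y))^{-1}$, we obtain the asserted identity in integrated, hence differential, form.

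The step I expect to be the real obstacle is not any of these manipulations but the regularity input underlying them: one must know that $s\mapsto\overline a_s^{-1}(y)$ and $s\mapsto D\overline a_s(\overline a_s^{-1}(y))^{-1}$ genuinely have finite $p$-variation in $s$, locally uniformly as $t$ ranges over $[0,\sigma(y))$, since every Young integral above as well as the applicability of Lemmas~\ref{Ito} and \ref{Sub} rests on this. In the stochastic setting Kunita obtains it from stochastic flow theory; here the corresponding deterministic fact is needed --- that the solution map of the characteristic Young system is a $C^1$ flow of diffeomorphisms whose spatial derivative solves a linear Young equation and therefore inherits finite $p$-variation --- and it is here that the $\mathcal{C}^{3,\alpha}$-regularity of the $F^j$ and the $C^3$-regularity of $u$ are consumed.
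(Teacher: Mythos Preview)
The paper gives no proof of this lemma; it simply states that the argument is ``an easy adaptation of Lemma~2.1 and Lemma~3.3 of \cite{Kunita2}''. So there is nothing in the paper itself to compare against beyond the pointer to Kunita.

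Your derivation of the Young differential equation for $\overline{a}_t^{-1}$ --- differentiating the identity $\overline{a}_s(\psi_s)\equiv y$ via Lemma~\ref{Ito} and then peeling off $A_r$ via Lemma~\ref{Sub} --- is correct and is exactly the Young-integral transcription of Kunita's computation. The regularity caveat you flag at the end is also the right one.

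There is, however, a genuine gap in your injectivity argument: it is circular within the paper's logical structure. You invoke Theorem~\ref{T1} to obtain $\overline{b}_t(x)=u(t,\overline{a}_t(x))$ and $\overline{c}_t(x)=D_xu(t,\overline{a}_t(x))$, but Theorem~\ref{T1} \emph{presupposes} that a local solution $u$ of (\ref{transport}) already exists. In this paper Lemma~\ref{L1} is invoked in the proof of the existence theorem that \emph{constructs} $u(t,x)=\overline{b}_t\circ\overline{a}_t^{-1}(x)$; at that moment there is no $u$ available, so Theorem~\ref{T1} cannot be called upon. Kunita's route does not pass through an existing solution: the diffeomorphism property of $\overline{a}_t$ on $\{\tau>t\}$ is established directly from the characteristic system --- the full flow of (\ref{characteristic}) is a diffeomorphism of $\mathbb{R}^{2d+1}$, the differential relation $D_{x_i}\overline{b}_t=\overline{c}_t\cdot D_{x_i}\overline{a}_t$ of Lemma~\ref{formula} (Kunita's Lemma~3.3) is proved straight from (\ref{characteristic}) without reference to any $u$, and a continuation argument from $\overline{a}_0=\mathrm{id}$ then handles injectivity. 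You should replace the appeal to Theorem~\ref{T1} by such a direct argument.
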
{\label{L2}}
\begin{lemma} \label{formula}
For $i = 1,\cdots , d$
 \begin{eqnarray*}
  D_{x_i}\overline{b}_t & = & \overline{c}_t \cdot D_{x_i} \overline{a}_t \\
D_{x_i}\overline{b}_t  \circ \overline{a}_t^{-1} & = & \overline{c}^i_t \circ \overline{a}^{-1}.
 \end{eqnarray*}
\end{lemma}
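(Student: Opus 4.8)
The first identity says that the characteristic flow carries the $1$-jet of a function to the $1$-jet of a function: the map $x\mapsto(\overline a_t(x),\overline b_t(x),\overline c_t(x))$ stays annihilated by the contact form $db-\sum_l c^l\,da^l$. The plan is to prove this by a variational argument. Set
\[
\Theta^i_t:=D_{x_i}\overline b_t-\overline c_t\cdot D_{x_i}\overline a_t=D_{x_i}\overline b_t-\sum_{l=1}^d\overline c^{\,l}_t\,D_{x_i}\overline a^{\,l}_t .
\]
I will show that $\Theta^i$ solves a homogeneous linear Young equation with $\Theta^i_0=0$, and conclude $\Theta^i\equiv0$. At $t=0$ we have $\overline a_0(x)=x$, $\overline b_0(x)=\phi(x)$, $\overline c_0(x)=D\phi(x)$, hence $D_{x_i}\overline b_0=\phi_{x_i}$ and $\overline c_0\cdot D_{x_i}\overline a_0=\sum_l\phi_{x_l}\delta_{li}=\phi_{x_i}$, so $\Theta^i_0=0$.

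Two ingredients are needed. First, differentiable dependence on the initial point: since $F^j(t,\cdot)\in\mathcal C^{3,\alpha}$, the field $G^j(t,z)=(-F^j_p,\ F^j-F^j_p\cdot c,\ F^j_x+F^j_u\,c)$ driving the characteristic system $dz_t=\sum_jG^j(t,z_t)\,dX^j_t$ is of class $\mathcal C^{2,\alpha}$ in $z$, hence $\mathrm{Lip}(1+\gamma)$ with $1+\gamma>p$; therefore on its interval of definition the map $x\mapsto z_t(x)=(\overline a_t,\overline b_t,\overline c_t)(x)$ is $C^1$, its partials $\zeta^i_t:=D_{x_i}z_t$ are of finite $p$-variation in $t$, and differentiating under the Young integral gives the variational equation $d\zeta^i_t=\sum_jD_zG^j(t,z_t)\zeta^i_t\,dX^j_t$. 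Componentwise this reads
\[
d\big(D_{x_i}\overline a^{\,l}_t\big)=-\sum_jD_{x_i}\!\big(F^j_{p_l}(t,z_t)\big)\,dX^j_t,\qquad d\overline c^{\,l}_t=\sum_j\big(F^j_{x_l}+F^j_u\,\overline c^{\,l}_t\big)dX^j_t,
\]
\[
d\big(D_{x_i}\overline b_t\big)=\sum_j\Big(D_{x_i}F^j(t,z_t)-D_{x_i}\!\big(F^j_p(t,z_t)\cdot\overline c_t\big)\Big)dX^j_t .
\]
Second, the Young product rule: applying the Corollary to Lemma \ref{Ito} to the map $(a,b)\mapsto ab$ and the path $(\overline c^{\,l},D_{x_i}\overline a^{\,l})\in\mathcal V^p(\mathbb R^2)$ (here $1\le p<2$), one gets $d\big(\overline c^{\,l}_tD_{x_i}\overline a^{\,l}_t\big)=\overline c^{\,l}_t\,d(D_{x_i}\overline a^{\,l}_t)+D_{x_i}\overline a^{\,l}_t\,d\overline c^{\,l}_t$.

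Now I carry out the computation. Expanding $D_{x_i}F^j=F^j_x\cdot D_{x_i}\overline a_t+F^j_uD_{x_i}\overline b_t+F^j_p\cdot D_{x_i}\overline c_t$ and $D_{x_i}(F^j_p\cdot\overline c_t)=\sum_l(D_{x_i}F^j_{p_l})\overline c^{\,l}_t+F^j_p\cdot D_{x_i}\overline c_t$, the two $F^j_p\cdot D_{x_i}\overline c_t$ terms cancel and
\[
d\big(D_{x_i}\overline b_t\big)=\sum_j\Big(F^j_x\cdot D_{x_i}\overline a_t+F^j_uD_{x_i}\overline b_t-\sum_l(D_{x_i}F^j_{p_l})\overline c^{\,l}_t\Big)dX^j_t .
\]
Inserting the equations for $D_{x_i}\overline a^{\,l}_t$ and $\overline c^{\,l}_t$ into the product rule gives
\[
d\big(\overline c_t\cdot D_{x_i}\overline a_t\big)=\sum_j\Big(-\sum_l\overline c^{\,l}_t\,D_{x_i}F^j_{p_l}+F^j_x\cdot D_{x_i}\overline a_t+F^j_u\,(\overline c_t\cdot D_{x_i}\overline a_t)\Big)dX^j_t .
\]
Subtracting, the terms $F^j_x\cdot D_{x_i}\overline a_t$ and $\sum_l(D_{x_i}F^j_{p_l})\overline c^{\,l}_t$ cancel, leaving
\[
d\Theta^i_t=\sum_{j=1}^nF^j_u(t,\overline a_t,\overline b_t,\overline c_t)\,\Theta^i_t\,dX^j_t,\qquad\Theta^i_0=0 .
\]
The coefficients $t\mapsto F^j_u(t,z_t)$ have finite $p$-variation and $p<2$, so this is a linear Young equation to which the uniqueness part of the existence--uniqueness theory applies; since $\Theta^i\equiv0$ is a solution, $\Theta^i_t=0$ throughout, i.e. $D_{x_i}\overline b_t=\overline c_t\cdot D_{x_i}\overline a_t$. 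For the second identity, fix $t<\sigma(y)$, put $x=\overline a_t^{-1}(y)$ (legitimate by Lemma \ref{L1}, which also gives $D(\overline a_t^{-1})(y)=(D\overline a_t(x))^{-1}$), and read the first identity as $D\overline b_t(x)=\overline c_t(x)^{\top}D\overline a_t(x)$; the chain rule then yields $D\big(\overline b_t\circ\overline a_t^{-1}\big)(y)=D\overline b_t(x)(D\overline a_t(x))^{-1}=\overline c_t(x)^{\top}=(\overline c_t\circ\overline a_t^{-1}(y))^{\top}$, which is the claimed componentwise relation $D_{x_i}(\overline b_t\circ\overline a_t^{-1})=\overline c^{\,i}_t\circ\overline a_t^{-1}$.

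The main obstacle is the first ingredient: justifying the differentiation of the Young integral with respect to the initial point $x$, the validity of the variational equation, and the fact that the partials $D_{x_i}\overline a_t,D_{x_i}\overline b_t,D_{x_i}\overline c_t$ remain of finite $p$-variation in $t$ on a common interval of definition (so that the product rule and the uniqueness theorem both apply there). Once these are in place the remainder is the purely algebraic cancellation above plus the classical inverse-function chain rule, exactly as in the adaptation of Lemma~3.3 of \cite{Kunita2}.
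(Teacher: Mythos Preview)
Your proposal is correct and is precisely the adaptation of Kunita's Lemma~3.3 that the paper invokes (the paper gives no proof of its own, merely citing \cite{Kunita2}). The variational computation leading to the homogeneous linear equation $d\Theta^i_t=\sum_jF^j_u(t,z_t)\Theta^i_t\,dX^j_t$ with $\Theta^i_0=0$ is algebraically right, and your derivation of the second identity from the first via the chain rule and Lemma~\ref{L1} matches how the paper uses the result in the subsequent theorem. Your honest flagging of the one genuine analytic ingredient---differentiability of the flow in the initial point and the variational equation for Young systems, supplied by the references \cite{Friz}, \cite{Lejay}, \cite{Li}---is appropriate; once that is granted, the rest is exactly the cancellation you wrote.
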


\begin{theorem}
Let $\phi \in C^3(\mathbb{R}^d)$. Then  $u(t,x)= \overline{b}_t(\overline{a}_t^{-1}(x))$, $[0,\sigma(x))$ is a local solution of (\ref{transport}).
\end{theorem}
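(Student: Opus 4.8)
The plan is to verify directly that $u(t,x)=\overline{b}_t(\overline{a}_t^{-1}(x))$ satisfies, for $t<\sigma(x)$, the integral identity in the definition of a local solution. By Lemma~\ref{L1} the map $\overline{a}_t^{-1}$ is a genuine diffeomorphism on this time range, so $u$ is well defined there, and $u(t,\cdot)\in C^3$ because $\phi\in C^3$, $F^j(t,\cdot)\in\mathcal{C}^{3,\alpha}$ and solutions of Young equations depend smoothly on their initial data. The initial condition is immediate: $\overline{a}_0=\mathrm{id}$ and $\overline{b}_0=\phi$ give $u(0,x)=\phi(x)$. The other ingredient is that Lemma~\ref{formula} identifies the gradient of the candidate solution with the third characteristic variable read along the flow, $D_xu(t,x)=\overline{c}_t(\overline{a}_t^{-1}(x))$; consequently, writing $Y_s:=\overline{a}_s^{-1}(x)$ for fixed $x$, one has $\overline{a}_s(Y_s)=x$, $\overline{b}_s(Y_s)=u(s,x)$ and $\overline{c}_s(Y_s)=D_xu(s,x)$.

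I would then apply the It\^o--Kunita--Ventzel formula, Lemma~\ref{Ito}, to the composition $\overline{b}_s\circ Y_s$, with the function-valued path $\overline{b}_s$ in the role of $g$, the driver $X=(X^1,\dots,X^n)$ in the role of $Z$, and $Y_s$ the inner path. Evaluating the second equation of the characteristic system~(\ref{characteristic}) at $(\,\cdot\,,\phi(\cdot),D\phi(\cdot))$ shows that $\overline{b}_s$ has the form required by Lemma~\ref{Ito},
\[
\overline{b}_t(y)=\phi(y)+\sum_{j=1}^n\int_0^t (F^j-F^j_p\cdot\overline{c}_s)(s,\overline{a}_s(y),\overline{b}_s(y),\overline{c}_s(y))\,dX^j_s ,
\]
while Lemma~\ref{L1} gives $dY_s=\sum_j D\overline{a}_s(Y_s)^{-1}F^j_p(s,x,\overline{b}_s(Y_s),\overline{c}_s(Y_s))\,dX^j_s$, so $Y\in\mathcal{V}^p$. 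Lemma~\ref{Ito} then produces, using $\overline{a}_s(Y_s)=x$, $\overline{b}_s(Y_s)=u(s,x)$, $\overline{c}_s(Y_s)=D_xu(s,x)$,
\begin{multline*}
u(t,x)=\phi(x)+\sum_{j=1}^n\int_0^t (F^j-F^j_p\cdot D_xu)(s,x,u(s,x),D_xu(s,x))\,dX^j_s\\
+\int_0^t D_y\overline{b}_s(Y_s)\,dY_s .
\end{multline*}
For the last term I would use the substitution formula, Lemma~\ref{Sub}, together with the expression for $dY_s$ and Lemma~\ref{formula} in the form $D_y\overline{b}_s(Y_s)\,D\overline{a}_s(Y_s)^{-1}=\overline{c}_s(Y_s)=D_xu(s,x)$, which turns it into $\sum_j\int_0^t D_xu(s,x)\cdot F^j_p(s,x,u(s,x),D_xu(s,x))\,dX^j_s$. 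Adding the two sums, the $F^j_p$ contributions cancel and one is left with $u(t,x)=\phi(x)+\sum_{j}\int_0^tF^j(s,x,u(s,x),D_xu(s,x))\,dX^j_s$, which is exactly~(\ref{transport}).

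The essential difficulty is not conceptual but lies in checking the hypotheses that make the applications of Lemma~\ref{Ito} and Lemma~\ref{Sub} legitimate: one must verify that $\overline{b}_s$ is twice continuously differentiable in $y$, that the drift $h_s(y):=\sum_j (F^j-F^j_p\cdot\overline{c}_s)(s,\overline{a}_s(y),\overline{b}_s(y),\overline{c}_s(y))e_j$ belongs to $C(\mathbb{R}^d,C_H^{1/q}([0,T],L(\mathbb{R}^n,\mathbb{R})))$, and that all composed coefficients $F^j(\cdot,x,u(\cdot,x),D_xu(\cdot,x))$ and $F^j_p(\cdot,x,\cdot,\cdot)$ have finite $q$-variation in time with $\tfrac1p+\tfrac1q>1$; these follow from the standing regularity assumptions on $F^j$ and $\phi$ and from the regularity and smooth dependence on initial data of solutions of Young systems. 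One must also stay within the correct life span: Lemma~\ref{L1} supplies the diffeomorphism property and the formula for $d\overline{a}_s^{-1}$ only on $[0,\sigma(x))$, and every manipulation above is carried out there, so $u(t,x)$ is defined and solves~(\ref{transport}) precisely for $t<\sigma(x)$.
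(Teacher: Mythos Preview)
Your proof is correct and follows essentially the same route as the paper: apply Lemma~\ref{Ito} to the composition $\overline{b}_t\circ\overline{a}_t^{-1}$, use Lemma~\ref{Sub} together with the formula for $d\overline{a}_t^{-1}$ from Lemma~\ref{L1} to handle the inner differential, and invoke Lemma~\ref{formula} to identify $D_xu$ with $\overline{c}_t\circ\overline{a}_t^{-1}$ so that the $F^j_p$ contributions cancel. Your write-up is in fact more careful than the paper's, since you explicitly verify the initial condition, fix $x$ and track the path $Y_s=\overline{a}_s^{-1}(x)$, and flag the regularity checks needed for Lemmas~\ref{Ito} and~\ref{Sub} to apply.
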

\begin{proof}
By Lemma \ref{Ito},
\begin{equation}\label{eq0}
 d\overline{b}_t \circ \overline{a}_t^{-1} = d \overline{b}_t(\overline{a}_t^{-1}) + D_x \overline{b}_t (\overline{a}_t^{-1}) d\overline{a}_t^{-1}.  
\end{equation}
From (\ref{characteristic}) we have 
\begin{eqnarray}\label{eq1}
 d \overline{b}_t(\overline{a}_t^{-1}) & = & \sum _{j=1}^n\{ F^j(t, \cdot, \overline{b}_t \circ \overline{a}_t^{-1}, \overline{c}_t 
\circ \overline{a}_t^{-1}) \\ 
& & -F^j_p(t, \cdot, \overline{b}_t \circ \overline{a}_t^{-1}, \overline{c}_t \nonumber
\circ \overline{a}_t^{-1}) \overline{c}_t 
\circ \overline{a}_t^{-1} \} dX^j_t
\end{eqnarray}
By Lemma \ref{Sub} and Lemma \ref{L1},
\begin{eqnarray}\label{eq2}
 D_x \overline{b}_t (\overline{a}_t^{-1}) d\overline{a}_t^{-1}  & = & \sum _{j=1}^n F_p^j(t, \cdot, \overline{b}_t \circ \overline{a}_t^{-1}, 
\overline{c}_t 
\circ \overline{a}_t^{-1})  \\ 
& & D_x\overline{b}_t(\overline{a}_t^{-1})D_x\overline{a}_t(\overline{a}^{-1}_t)^{-1}dX^j_t. \nonumber
\end{eqnarray}
From Lemma \ref{L2} and definitions,
\begin{equation}\label{eq3}
D_xu_t= 
D_x(\overline{b}_t \circ \overline{a}^{-1}_t) = \overline{c}_t \circ \overline{a}_t^{-1}= 
D_x\overline{b}_t(\overline{a}_t^{-1})D_x\overline{a}_t(\overline{a}^{-1}_t)^{-1}.
\end{equation}

Combining (\ref{eq0}), (\ref{eq1}), (\ref{eq2}) and (\ref{eq3}) we conclude that
\[
 du_t = \sum _{j=1}^n F^j(t, \cdot, \overline{b}_t \circ \overline{a}_t^{-1}, \overline{c}_t 
\circ \overline{a}_t^{-1})dX^j_t = \sum _{j=1}^n F^j(t, \cdot,u_t, D_xu_t) dX^j_t.
\]
\end{proof}
\begin{theorem}
 Let $u$ be a local solution of (\ref{transport}), where $\phi \in C^3(\mathbb{R}^d)$ such that $u(t, \cdot) \in C^2(\mathbb{R}^d)$ 
for all $t \in [0,T(x))$. Then $u(t,x)=\overline{b}_t(\overline{a}_t^{-1}(x))$ for $t \in [0,T(x) \wedge \sigma(x))$.
\end{theorem}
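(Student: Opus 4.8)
The plan is to prove the converse of the preceding theorem: any local solution $u$ of (\ref{transport}) is \emph{forced} to coincide with the representation $\overline{b}_t\circ\overline{a}_t^{-1}$ produced by the characteristics. The idea, which is Cauchy's method of characteristics in the Young setting, is to build the characteristic curves \emph{along the given solution} $u$, to recognize the resulting triple via Theorem \ref{T1} as a genuine solution of the characteristic Young system (\ref{characteristic}), and then to invoke uniqueness for (\ref{characteristic}) to identify it with $(\overline{a}_t,\overline{b}_t,\overline{c}_t)$.

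First I would reduce to a pointwise statement. Fix $x\in\mathbb{R}^d$ and $t_0\in[0,T(x)\wedge\sigma(x))$. Since $t_0<\sigma(x)$, the definition of $\sigma$ gives a point $z$ with $\tau(z)>t_0$ and $\overline{a}_{t_0}(z)=x$, and by Lemma \ref{L1} this $z$ is unique, $z=\overline{a}_{t_0}^{-1}(x)$. Hence it suffices to prove the single equality $u(t_0,x)=\overline{b}_{t_0}(z)$. Next, consider the time-dependent Young ODE
\[
 da_t=-\sum_{j=1}^n F_p^j\!\bigl(t,a_t,u(t,a_t),D_xu(t,a_t)\bigr)\,dX^j_t,
 \qquad a_0=z .
\]
Its coefficient $a\mapsto F_p^j(t,a,u(t,a),D_xu(t,a))$ is the composition of the field $F_p^j(t,\cdot)\in\mathcal{C}^{3,\alpha}$ with the data $u(t,\cdot)\in C^2$, hence regular enough in space and of finite $q$-variation in time for the existence–uniqueness theory for Young equations recalled in Section~3 to apply; let $a$ be its maximal solution and put $b_t:=u(t,a_t)$, $c_t:=D_xu(t,a_t)$. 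With these $b_t,c_t$ the displayed equation is exactly (\ref{E1}), so Theorem \ref{T1} shows that $(a_t,b_t,c_t)$ solves (\ref{characteristic}) with initial datum
\[
 (a_0,b_0,c_0)=\bigl(z,\,u(0,z),\,D_xu(0,z)\bigr)=\bigl(z,\phi(z),D\phi(z)\bigr),
\]
since $u(0,\cdot)=\phi$.

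Now $(\overline{a}_t(z),\overline{b}_t(z),\overline{c}_t(z))$ is, by definition, the solution of (\ref{characteristic}) with the \emph{same} initial datum $(z,\phi(z),D\phi(z))$. By uniqueness of solutions of the characteristic Young system the two triples agree on the intersection of their maximal intervals; since $t_0<\tau(z)\le\overline{T}(z)$, that intersection contains $[0,t_0]$ — here, exactly as in \cite{Kunita2}, one checks that for $s\le t_0$ the curve $\overline{a}_s(z)$ remains inside the region $\{(s,y):s<T(y)\}$ where $u(s,\cdot)$ is defined, which is what the constraint $t_0<T(x)\wedge\sigma(x)$ encodes after unwinding the definitions of $\tau$ and $\sigma$. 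In particular $b_{t_0}=\overline{b}_{t_0}(z)$, that is, $u(t_0,x)=u(t_0,\overline{a}_{t_0}(z))=\overline{b}_{t_0}(z)=\overline{b}_{t_0}(\overline{a}_{t_0}^{-1}(x))$. As $x$ and $t_0<T(x)\wedge\sigma(x)$ were arbitrary, this proves the theorem.

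The step I expect to be the real obstacle is the life-time bookkeeping: making precise that the maximal interval of the $u$-characteristic $a$ started at $z$ coincides, up to time $t_0$, with that of $s\mapsto\overline{a}_s(z)$, and that $u(s,\overline{a}_s(z))$ and $D_xu(s,\overline{a}_s(z))$ are meaningful for all $s\le t_0$. This is governed by the adjointness between the exit times $\tau$ and $\sigma$ and their monotonicity properties, following Kunita \cite{Kunita2}, and is a matter of unwinding those definitions rather than of any new analytic estimate. A minor technical point is the regularity gap between the present hypothesis $u(t,\cdot)\in C^2$ and the hypothesis $u(t,\cdot)\in C^3$ used in Theorem \ref{T1}; it is removed by a standard localization/approximation argument — approximating $\phi$ in $C^3$ and passing to the limit via the continuity of the Young flow in its data — or, alternatively, by strengthening the hypothesis to $C^3$ with no loss for the applications in mind.
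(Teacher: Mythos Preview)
Your argument is correct and is precisely the detailed version of what the paper's one-line proof ``It is an easy consequence of Theorem \ref{T1}'' leaves implicit: you run the $u$-characteristic from $z=\overline{a}_{t_0}^{-1}(x)$, apply Theorem \ref{T1} to identify it with a solution of (\ref{characteristic}), and conclude by uniqueness for the characteristic Young system. Your observation that the hypothesis $u(t,\cdot)\in C^2$ here is weaker than the $C^3$ assumed in Theorem \ref{T1} is a genuine inconsistency in the paper; your proposed fix (either approximate or simply strengthen the hypothesis) is the right way to handle it.
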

\begin{proof}
 It is an easy consequence of Theorem \ref{T1}.
\end{proof}


\end{document}